\newcommand{\fracc}[2]{{#1/(#2)}}
\newtheorem{theorem}{Theorem}
\newtheorem{proposition}{Proposition}[section]
\newtheorem{lemma}[proposition]{Lemma}
\newtheorem{corollary}[proposition]{Corollary}
\def\lb{\langle\hspace*{-0.13em}\langle}
\def\rb{\rangle\hspace*{-0.13em}\rangle}
\def\SISGlog{G_{p,\log}}
\def\SISG{p}
\def\mtimes{\diamond}
\newcommand{\VV}{\langle M^K_{1,\cdot}\rangle}
\newcommand{\TM}{\mathbf{A}}
\def\TMstar{A^*}
\newcommand{\TMtrans}{\TM^T}
\newcommand{\SG}{a}
\def\MM{\mathbb{M}}
\renewcommand{\P}{\mathbf{P}}
\def\E{\mathbf{E}}
\newcommand{\Var}{\operatorname{\mathbf{Var}}}
\newcommand{\Cov}{\operatorname{\mathbf{Cov}}}
\newcommand{\To}[1]{\stackrel{#1}{\longrightarrow}}
\newcommand{\Ton}[1]{\To{#1 \rightarrow0}}
\newcommand{\limve}{\Ton{\ve}}
\newcommand{\ARG}{\bolds{\cdot}}
\newcommand{\R}{{\mathbb{R}}}
\newcommand{\N}{{\mathbb{N}}}
\newcommand{\Z}{{\mathbb{Z}}}
\newcommand{\CA}{\mathcal{A}}
\newcommand{\CF}{\mathcal{F}}
\newcommand{\1}{\mathbh{1}}
\newcommand{\ve}{\varepsilon}
\def\CF{\mathcal{F}}
\def\ve{\varepsilon}
\def\CA{\mathcal{A}}
\begin{document}
\begin{frontmatter}

\title{Infinite rate mutually catalytic branching in infinitely many colonies: The longtime behavior\thanksref{T1}}
\runtitle{IMUB: The longtime behavior}

\begin{aug}
\author[A]{\fnms{Achim} \snm{Klenke}\corref{}\ead[label=e1]{math@aklenke.de}}
\and
\author[B]{\fnms{Leonid} \snm{Mytnik}\thanksref{t2}\ead[label=e2]{leonid@ie.technion.ac.il}}
\thankstext{T1}{Supported in part by the German--Israeli Foundation
Grant G-807-227.6/2003.}
\thankstext{t2}{Supported in part by Israel Science Foundation Grant
 1162/06.}
\runauthor{A. Klenke and L. Mytnik}
\affiliation{Universit{\"{a}}t Mainz and Technion Haifa}
\address[A]{Institut f\"{u}r Mathematik\\
Johannes Gutenberg-Universit{\"{a}}t Mainz\\
Staudingerweg 9\\
55099 Mainz\\
Germany\\
\printead{e1}} 
\address[B]{Faculty of Industrial Engineering\\
\quad and Management\\
Technion---Israel Institute of Technology\\
Haifa 32000\\
Israel\\
\printead{e2}}
\end{aug}

\received{\smonth{10} \syear{2009}}
\revised{\smonth{5} \syear{2010}}

%
\begin{abstract}
Consider the infinite rate mutually catalytic branching process (IMUB)
constructed in [Infinite rate mutually catalytic branching in
infinitely many colonies. Construction, characterization and convergence (2008)
Preprint] and [\textit{Ann. Probab.} \textbf{38} (2010) 479--497].
For finite initial conditions, we show that only one type survives
in the long run if the interaction kernel is recurrent. On the other hand,
under a slightly stronger condition than transience, we show that both
types can coexist.
\end{abstract}

%
\begin{keyword}[class=AMS]
\kwd[Primary ]{60J65}
\kwd{60G17}
\kwd{60J55}
\kwd{60K35}.
\end{keyword}
\begin{keyword}
\kwd{Mutually catalytic branching}
\kwd{L{\'{e}}vy noise}
\kwd{stochastic differential equations}
\kwd{Trotter product}
\kwd{coexistence}
\kwd{segregation of types}.
\end{keyword}

\pdfkeywords{60J65, 60G17, 60J55, 60K35,
Mutually catalytic branching, Levy noise, stochastic differential equations, Trotter product, coexistence, segregation of types}

\end{frontmatter}

\section{Introduction and main results}
\label{S1}
\subsection{Background and motivation}
\label{S1.1}
As a model for mutually catalytic\break branching,
Dawson and Perkins \cite{DawsonPerkins1998} considered the following
system of coupled stochastic differential equations:
%
%
%
\begin{eqnarray}
\label{E1.1}
Y_{i,t}(k)&=& Y_{i,0}(k)+\int_0^t \sum_{l\in S} \CA
(k,l) Y_{i,s}(l)\,ds \nonumber\hspace*{-35pt}
\\[-8pt]
\\[-8pt]
&&{}+\int_0^t (\gamma Y_{1,s}(k)Y_{2,s}(k) )^{1/2}\,
dW_{i,s}(k)\qquad\mbox{for }t\geq0,\  k\in S,\ i=1,2.
\nonumber\hspace*{-35pt}
\end{eqnarray}
Here $S$ is a countable set that is thought of as the site space and
$\gamma>0$ is a~parameter. (In fact, Dawson and Perkins made the
explicit choice $S=\Z^d$.) The matrix $\CA$ is defined by
%
%
\begin{equation}
\label{E1.2}
\CA(k,l)=\TM(k,l)-\1_{\{k=l\}},
\end{equation}
where $\TM$ is the transpose of a stochastic matrix $\TMtrans$
indexed by $S$ such that $\sup_{k\in S}\sum_{l\in S}\TM(k,l)<\infty
$. Note that $\CA^T$ is the $q$-matrix of the continuous time Markov
chain on $S$ with jump kernel $\TMtrans$.
(In fact, Dawson and Perkins assumed that $\TM$ be symmetric but this
is not substantial.) Finally, $(W_i(k), k\in S, i=1,2)$ is an
independent family of one-dimensional Brownian motions.

This is a spatial model for the evolution of two populations $i=1,2$.
$Y_{i,t}(k)$ is the size of the population of type $i$ at site $k\in S$
at time $t$. The individuals migrate on the site space $S$ according to
the discrete space heat flow induced by $\TMtrans$. Furthermore, at
each given site each type of the population undergoes a random dynamic
that can be interpreted as continuous state Feller's branching with a
branching rate proportional to the local size of the respective other
type. Both types undergo (independently) the same branching dynamics
and influence each other in a symmetric way---hence the name mutually
catalytic branching process.

Dawson and Perkins studied the longtime behavior of this
model with summable initial conditions (and symmetric $\TM$) and
established a dichotomy of coexistence versus noncoexistence of types
depending on transience and recurrence of the Markov chain associated
with $\TM$.
Via the self-duality of the mutually catalytic branching process, its total
mass behavior for summable initial conditions provides information
about the local
behavior if the initial condition is infinite and sufficiently homogeneous.
For $x\in[0,\infty)^2$, let $\underline x$ denote the state in
$([0,\infty)^2)^S$ with $\underline x_i(k)=x_i$ for all $k\in S$, $i=1,2$.
Assume that $Y_0=\underline x$. In \cite{DawsonPerkins1998}, Theorem
1.4, it is shown that $Y_t$ converges in
distribution to some random field $Y_\infty$ as $t\to\infty$.
Furthermore (under some mild regularity assumptions on $\TM^T$), we have
\[
\P_{\underline x}[Y_{1,\infty}(0)Y_{2,\infty}(0)>0]>0
\quad\Longleftrightarrow\quad\TMtrans\mbox{ is transient.}
\]
Hence, in the recurrent case, for constant initial conditions, the two types
segregate locally and form clusters. The assumption that the initial
point is
constant can be weakened to an ergodic random initial condition (see
\cite{CoxKlenkePerkins2000}).

The starting point for this work was the wish to get a quantitative description
of the cluster growth in the recurrent case. We only briefly give the
heuristics. Dawson and Perkins also constructed a version of their
process in
continuous space $\R$ instead of $S$. This
process is defined as the solution of the stochastic partial
differential equation
%
%
\begin{equation}
\label{e1.3}
\frac{dY_{i,t}(r)}{dt}=\Delta Y_{i,t}(r) + \sqrt{\gamma
Y_{1,t}(r)Y_{2,t}(r)} \dot{W}_i(t,r)\qquad\mbox{for }r\in\R,\ i=1,2,\hspace*{-35pt}
\end{equation}
where $\dot{W}_1$ and $\dot{W}_2$ are independent space time white
noises and $\Delta$ is the Laplace operator. As the Laplace operator
generates the semigroup of Brownian motion, and since Brownian motion
on the real line is recurrent, here also the types segregate. Now, due
to Brownian scaling, if we denote by~$Y^\gamma$ the solution of (\ref
{e1.3}) with that given value of $\gamma$, then we obtain
%
%
\begin{equation}
\label{E1.4}
\P_{\underline x} \bigl[ \bigl(Y^\gamma_{T}\bigl(r\sqrt{T}\bigr) \bigr)_{r\in\R
}\in\ARG\bigr]
=\P_{\underline x} [ (Y^{\gamma T}_1(r) )_{r\in\R}\in
\ARG].
\end{equation}
Equation (\ref{E1.4}) shows that clusters of $Y_{1,T}$ grow like
$\sqrt
{T}$ and
that a better understanding of the precise cluster formation can be
obtained by letting $\gamma\to\infty$ for fixed time. The process
$X$ that is the limit (in distribution) of~$Y^\gamma$ as $\gamma\to
\infty$ is called the \textit{infinite rate mutually catalytic
branching process (IMUB)}.

In \cite{KM1}, the infinite rate mutually catalytic branching process
$X$ was constructed for $S$ a singleton. It was shown that $Y^\gamma$
converges to $X$ as $\gamma\to\infty$ and $X$ was characterized in
terms of a martingale problem and in terms of its generator. Since the
two types cannot coexist in the limit $\gamma\to\infty$, the proper
state space for the one-colony IMUB is
\[
E:=[0,\infty)^2\setminus(0,\infty)^2.
\]

In \cite{KM2}, the IMUB process $X$ was constructed for countable $S$
via approximate solutions to a related Poisson noise stochastic partial
differential equation. Here $X$ is a strong Markov process that takes
values in a suitable subspace of $E^S$ that fulfills some growth
condition (Liggett--Spitzer space). More precisely, as shown in \cite
{KM2}, Section 1, for $k\in S$, $i=1,2$, the process
%
%
\begin{equation}
\label{Emar}
\MM_{i,t}(k):=X_{i,t}(k)-X_{i,0}(k)-\int_0^t\CA X_{i,s}(k)\,ds,\qquad
t\geq0,
\end{equation}
is an $L^p$-martingale for every $p\in[1,2)$ (but not for $p=2$) that
could be represented as the stochastic integral with respect to a
Poisson point process.

Furthermore, the IMUB process $X$ was characterized as the solution to
a certain martingale problem and it was shown that $Y^\gamma$
converges to $X$ (in distribution). In order to formulate the
martingale problem, we need the notation
\[
x \mtimes y := -(x_1+x_2)(y_1+y_2) + i(x_1-x_2)(y_1-y_2)\qquad
\mbox{for }
x,y\in E
\]
(with $i=\sqrt{-1}$) and
\[
\lb x,y\rb=\sum_{k\in S}x\mtimes y
\]
for $x,y\in E^S$ such that the sum is well-defined.
Then the $E^S$ valued Markov process $X$ with initial value $X_0=x$ is
characterized by the requirement that
%
%
\begin{equation}
\label{EMP}
e^{\lb X_t,y\rb}-e^{\lb x,y\rb}-\int_0^t\lb\CA X_s, y\rb e^{\lb
X_s,y\rb}\,ds,\qquad t\geq0,
\end{equation}
be a martingale for all suitable
$y\in E^S$.

In \cite{KO}, a construction of $X$ is performed via a Trotter type
approximation scheme, see also \cite{Oeler2008}. Loosely speaking, for
given $\ve>0$, consider a process~$X^\ve$ that solves (\ref{E1.1})
with $\gamma=0$ in each interval $[n\ve,(n+1)\ve)$, $n\in\N_0$. At
the times $n\ve$ the state $X^\ve_{n\ve-}$ is replaced by the limit
(as $t\to\infty$) of a solution~$Y$ of (\ref{E1.1}) with $\CA=0$ and
$\gamma>0$ with initial state $Y_0:=X^\ve_{n\ve-}$. That is, the
value $X^\ve_{n\ve-}(k)$ at each colony $k\in S$ is replaced
(independently) by a point in $E$ chosen randomly according to the exit
distribution of planar Brownian motion in $[0,\infty)^2$ started at
$X^\ve_{n\ve-}(k)$. (See Section~\ref{S2.2} for a more detailed
description.) It was shown in \cite{KO} that $X^\ve$ converges as
$\ve\to0$ to a process that solves the martingale problem (\ref{EMP}).

In this paper, we aim at understanding the longtime behavior of $X$
for summable initial states $x$ [and thus, in (\ref{EMP}) we could take
any bounded $y\in E^S$]. Let $\tau$ be the amount of time that two
independent Markov chains with $q$-matrix $\CA^T$ spend together. We
show that if $\tau$ has
infinite expectation (and $\CA^T$ fulfils some modest regularity
condition), then the types cannot coexist in the long run. On the other
hand, if $\CA$ fulfills a condition that is somewhat stronger than $\E
[\tau]<\infty$, then the types can coexist in the long run. As $X$ is
a process with an infinite variance random dynamics, the meta theorem
that relates stability of the longtime behavior to transience of the
migration dynamics does not apply here. It remains open to check if
there are cases where $\E[\tau]<\infty$ but coexistence of types is
impossible. In particular, it would be interesting to know if this
could happen for certain (transient) random walk kernels $\CA$.

\subsection{Results}

Before we present our result, we give a more detailed description the
longtime behavior for the case of finite $\gamma$ and \textit{finite}
initial conditions as stated in \cite{DawsonPerkins1998}.
While segregation of types for constant initial conditions can be
rephrased as ``two types cannot be present at the same site'' in the
longrun, for finite initial conditions, this does not make sense, as
the population dissipates in space anyway. Here, the notion of (local)
segregation of types is replaced by the notion of (global)
noncoexistence of types.

Let
%
%
\begin{equation}
\label{E1.3}
M^Y_{i,t}:=\langle Y_{i,t}, 1\rangle:=\sum_{k\in S}Y_{i,t}(k),\qquad
t\geq0,
\end{equation}
be the total mass process of type $i=1,2$ and assume that
$M^Y_{1,0},M^Y_{2,0}\in(0,\infty)$. Since $M^Y_{1}$ and $M^Y_{2}$ are
(orthogonal) nonnegative martingales, they converge almost surely and,
in fact, also in $L^1$. Denote by $M^Y_{i,\infty}$ the limit
variables. For the case where $\TM$ is a random walk kernel,
Theorem~1.2 of \cite{DawsonPerkins1998} takes the concise form
%
%
\begin{equation}
\label{E1.6}
\P[M^Y_{1,\infty}M^Y_{2,\infty}>0 ] > 0 \quad
\Longleftrightarrow\quad\TM\mbox{ is transient.}
\end{equation}

In order to formulate the result for the more general case, we have to
be a bit more careful. Let $\SG_t$ be the continuous time kernel; that
is,
%
%
\begin{equation}
\label{E1.7}
\SG_t=\exp(\CA t):=\sum_{n=0}^\infty\frac{t^n \CA^n}{n!}=\sum
_{n=0}^\infty e^{-t}\frac{t^n \TM^n}{n!},
\end{equation}
where $\TM^{n}$ and $\CA^n$ denote the matrix powers. Furthermore,
for $t\geq0$, define the Green kernels
%
%
\begin{equation}
\label{E1.8}
G_t(k,l):=\int_0^t \SG_s(k,l)\,ds\quad\mbox{and}\quad G(k,l):=\int
_0^\infty\SG
_s(k,l)\,ds.
\end{equation}
Finally, let
%
%
\begin{equation}
\label{E1.9}
G_t^{*}:=\sup_{k\in S}G_t(k,k).
\end{equation}

We say that coexistence of types is possible if $\P[M^Y_{1,\infty
}M^Y_{2,\infty}>0]>0$ for all initial states $Y_0$ with
$M^Y_{1,0},M^Y_{2,0}\in(0,\infty)$. We say that coexistence of types
is impossible if $\P[M^Y_{1,\infty}M^Y_{2,\infty}>0]=0$ for all
initial states $Y_0$ with
$M^Y_{1,0},M^Y_{2,0}\in[0,\infty)$.\vspace*{1pt}

Now, Theorem 1.2 of \cite{DawsonPerkins1998} states the following.
%
\setcounter{theorem}{-1}

\begin{theorem}[(Theorem 1.2 of \cite{DawsonPerkins1998})]
Assume that $\TM$ is symmetric.
\begin{longlist}[(ii)]
\item[(i)]
If $ \sup_{k\in S}G(k,k)<\infty$, then coexistence of types is possible.
\item[(ii)] If
%
%
\begin{equation}
\label{E1.10}
\inf_{k\in S}\liminf_{t\to\infty} \frac{G_t(k,k)}{G_t^{*}}>0
\end{equation}
and if $\TM$ is recurrent and irreducible, then coexistence of types
is impossible.
\end{longlist}
\end{theorem}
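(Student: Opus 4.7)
Because $W_1$ and $W_2$ are independent, the total-mass processes $M^Y_{1,t},M^Y_{2,t}$ are non-negative martingales with vanishing cross-bracket, hence converge almost surely to limits $M^Y_{i,\infty}$; the issue is whether the product of limits is positive. The main tool is the cross-moment identity obtained by applying It\^o's formula to $Y_1(k,t)Y_2(l,t)$: because the noises are independent, $d\langle Y_1(k),Y_2(l)\rangle=0$, so the expectation solves a pure linear system giving
\begin{equation*}
\E[Y_1(k,t)Y_2(l,t)]=\sum_{k',l'\in S}a_t(k,k')\,a_t(l,l')\,Y_{1,0}(k')\,Y_{2,0}(l').
\end{equation*}

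For part (i), set $k=l$, sum over $k$, and use symmetry of $a$ to collapse $\sum_k a_s(k,k')a_s(k,l')$ into $a_{2s}(k',l')$. Integrating in time and recalling that $\langle M^Y_i\rangle_t=\gamma\int_0^t\sum_k Y_1(k,s)Y_2(k,s)\,ds$ yields
\begin{equation*}
\E\bigl[\langle M^Y_i\rangle_\infty\bigr]=\tfrac{\gamma}{2}\sum_{k',l'\in S}G(k',l')\,Y_{1,0}(k')\,Y_{2,0}(l').
\end{equation*}
Positive semi-definiteness of the symmetric kernel $G$ gives $G(k',l')\le\sqrt{G(k',k')G(l',l')}\le\sup_kG(k,k)<\infty$, so the right side is bounded by $\tfrac{\gamma}{2}\sup_kG(k,k)\cdot M^Y_{1,0}M^Y_{2,0}$ and each $M^Y_{i,t}$ is $L^2$-bounded. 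Double stochasticity of $a_t$ (from symmetry of $a$) yields $\E[M^Y_{1,t}M^Y_{2,t}]=M^Y_{1,0}M^Y_{2,0}$ for every $t$, and $L^2$-convergence combined with Cauchy--Schwarz passes to the limit. Hence $\E[M^Y_{1,\infty}M^Y_{2,\infty}]=M^Y_{1,0}M^Y_{2,0}>0$ and coexistence is possible.

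For part (ii) I argue by contradiction. The difference $D_t:=M^Y_{1,t}-M^Y_{2,t}$ is a continuous local martingale with $\langle D\rangle_t=2\gamma\int_0^t\sum_kY_1Y_2\,ds$ that converges almost surely to $M^Y_{1,\infty}-M^Y_{2,\infty}$; by Dambis--Dubins--Schwarz a convergent continuous local martingale has a.s.\ finite quadratic variation, so
\begin{equation*}
\int_0^\infty\sum_{k\in S}Y_1(k,s)Y_2(k,s)\,ds<\infty\quad\text{a.s.}
\end{equation*}
Suppose coexistence occurs on an event $A$ with $\P(A)>0$. For $s>t$ I would use the Duhamel representation $Y_i(k,s)=y^t_i(k,s)+N^t_i(k,s)$, where $y^t_i(k,s):=\sum_{k'}a_{s-t}(k,k')Y_i(k',t)$ is the deterministic heat flow from $Y_i(\cdot,t)$ and $N^t_i$ is a conditional $L^2$ martingale whose norm is governed by the tail overlap (which tends to $0$ as $t\to\infty$). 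On $A$ the heat flows $y^t_i$ carry total mass $M^Y_{i,\infty}-o(1)>0$; recurrence together with the regularity hypothesis \equ{E1.10} should force a lower bound on $\sum_k y^t_1(k,s)y^t_2(k,s)$ whose time integral diverges, contradicting the a.s.\ finiteness just obtained.

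The principal obstacle is precisely this last lower bound. The second-moment identity easily controls $\E[\sum_k Y_1Y_2]$, but in the recurrent case the $M^Y_i$ are not $L^2$-bounded, so expectations cannot be blithely taken on $A$. Overcoming this requires either a localisation (e.g.\ stopping when the total mass of each type leaves a dyadic interval $[\varepsilon,\varepsilon^{-1}]$) or a coupling with the heat flow that promotes the expected-value estimate to a pathwise one on $A$. The role of hypothesis \equ{E1.10}, which says the diagonal Green kernel uniformly controls the sup, is precisely to make the spread of the heat flow of an arbitrary configuration robustly comparable to $G_t^*$, which in turn drives the divergence of the overlap integral under recurrence.
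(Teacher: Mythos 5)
This statement is quoted verbatim from Dawson and Perkins \cite{DawsonPerkins1998} (their Theorem 1.2) purely as background; the present paper contains no proof of it, so there is no in-paper argument to compare against. I will therefore evaluate the proposal on its own terms.

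Your part (i) is correct and is essentially the standard second-moment argument. Independence of the driving noises kills the cross-bracket, so $\E[Y_1(k,t)Y_2(l,t)]$ solves the tensor heat equation and equals $(a_t\otimes a_t)$ applied to the initial product; collapsing the diagonal via symmetry of $a$ gives $a_{2s}$, and integrating in $s$ produces $\tfrac{\gamma}{2}\sum_{k',l'}G(k',l')Y_{1,0}(k')Y_{2,0}(l')$ for $\E[\langle M^Y_i\rangle_\infty]$. The bound $G(k',l')\le\sup_k G(k,k)$ (e.g.\ from $a_s(k,l)\le\tfrac12(a_s(k,k)+a_s(l,l))$) then yields $L^2$-boundedness, and since $M^Y_1 M^Y_2$ is a martingale with constant expectation $M^Y_{1,0}M^Y_{2,0}>0$, $L^2$-convergence gives $\E[M^Y_{1,\infty}M^Y_{2,\infty}]>0$. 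Clean and complete.

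Part (ii) is an honest sketch with an acknowledged and genuine gap: the pathwise lower bound on $\int_0^\infty\sum_k Y_1 Y_2\,ds$ on the coexistence event $A$ is exactly the hard step, and you have not supplied it. Expectations cannot be localised to $A$ because in the recurrent case $M^Y_i$ is not $L^2$-bounded; the Duhamel decomposition $Y_i(\cdot,s)=y_i^t(\cdot,s)+N^t_i(\cdot,s)$ you invoke still requires a quantitative argument that the heat-flow overlap $\sum_k y_1^t(k,s)y_2^t(k,s)$ is bounded below by something like $c\,a_{2(s-t)}^{*}\,M^Y_{1,t}M^Y_{2,t}$ under hypothesis \equ{E1.10}, plus control of the martingale error $N^t_i$ uniformly on $A$. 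In fact, the present paper's own Theorem~\ref{T2} carries out precisely this programme in the harder infinite-variance setting (Section~\ref{S3}): one writes the quadratic-variation increment on $A$ as $Z=Z^{(1)}-Z^{(2)}+Z^{(3)}$, shows the deterministic heat-flow contribution $Z^{(1)}$ is $\gtrsim c'\,\bar G^*_t\,M_1 M_2$ using \equ{E1.10.1}, absorbs the feedback term $Z^{(2)}$ by a factor $(1+\bar G^*_t/2)^{-1}$, and controls the martingale error $Z^{(3)}$ by Chebyshev. For the finite-variance Dawson--Perkins model the truncation to $X^K$ is unnecessary, but the decomposition-and-lower-bound step is still mandatory; until that is spelled out, (ii) is a plan, not a proof.
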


The theorem describes a dichotomy between a stable behavior
(coexistence of types) and an instable or clustering behavior
(segregation of types) depending on properties of the Green function of
the underlying migration dynamics. A~similar dichotomy along the same
line of transience and recurrence (in the case of migration of random
walk type) was observed before for many interacting models with finite
variance dynamics such as the voter model (see \cite
{CliffordSudbury1973,HolleyLiggett1975}), interacting diffusions on a
compact interval \cite{NotoharaShiga1980,Shiga1980a,CoxGreven1994a},
branching random walk \cite{Kallenberg1977}, the generalized
smoothing and potlatch process~\cite{HolleyLiggett1981}, and the
so-called linear systems~(\cite{Liggett1985}, Chapter~IX). If the
local dynamics has moments only up to order $1+\beta$ for some $\beta
\in(0,1)$, then the the critical line between the two regimes may
shift to the point where higher powers of the Green operator
are finite (see~\cite{DawsonFleischmann1985}). In the model we study
in this paper, all moments less than the second are finite, and thus a
dichotomy could be expected close to the transience/recurrence line but
a little shifted to the transient side.\vadjust{\goodbreak}

In order to prepare for the formulation of our theorem and since we
want to get rid of the symmetry assumption that Dawson and Perkins
imposed on $\TM$, we have to introduce some more notation first.
Recall that $\TMtrans(k,l)=\TM(l,k)$ is the transpose of $\TM$ and
define $\SG_t^T$, $G_t^T$ and so on for $\TMtrans$ similarly as $\SG
_t$, $G_t$ and so on for $\TM$. Furthermore, define the symmetried kernels
%
%
\begin{eqnarray}
\label{E5.2}
\bar\SG_t(k,l) &= &\SG_{t/2}^T\SG_{t/2}(k,l) = \sum_{m\in S}\SG
_{t/2}(m,k)\SG_{t/2}(m,l),\nonumber
\\[-8pt]
\\[-8pt]
\bar\CA&=&\tfrac12 (\CA+\CA^T )\quad\mbox{and}\quad\bar
\TM=\tfrac12
(\TM+\TMtrans).
\nonumber
\end{eqnarray}
Note that $(\bar\SG_t)_{t\geq0}$ is a semigroup (and is generated by
$\bar\CA$) if $\CA\CA^T=\CA^T\CA$. In particular, if $S$ is an
Abelian group and $\TM$ is a random walk kernel, then $(\bar\SG
_t)_{t\geq0}$ is the semigroup of the difference of two (rate $1/2$)
random walks and its one step transition matrix is $\bar\TM$.

Define the expected amount of time $\tau$ two independent Markov
chains with $q$-matrix $\CA^T$ spend together:
%
%
\begin{equation}
\label{E5.4_1}
\bar G(k,l):=\lim_{t\to\infty}\bar G_t(k,l)\qquad\mbox{where
}\bar G_t(k,l):=\int_0^t \bar\SG_s(k,l)\,ds.
\end{equation}
As mentioned above, for many models with migration and local finite
variance random fluctuations, finiteness of $\bar G$ is equivalent to stability.
We will show for the IMUB model here that $\bar G(0,0)=\infty$ (plus
some mild regularity conditions) is sufficient for noncoexistence of types.
In order to formulate the regularity conditions properly, we will also need
%
%
\begin{equation}
\label{E1.9_1}
\bar G_t^*:=\sup_{k\in S}\bar G_t(k,k).
\end{equation}

In order to show coexistence of types, we need more refined quantities.
Define
%
%
\begin{eqnarray}
\label{E5.3}
\SISG_{s}(k,l)&:=& \bigl(\SG_s(\TM+\TMtrans)\SG_s^T \bigr)(k,l)\nonumber
\\[-8pt]
\\[-8pt]
&\hspace*{2.8pt}=&\sum_{m,n\in S}\SG_s(k,m)\bigl(\TM(m,n)+\TM(n,m)\bigr)\SG_s(l,n).
\nonumber
\end{eqnarray}
Let us present a very rough heuristic for the appearance of this object.
If we start with a unit mass of type 1 at $k_1$ and a unit mass of type
2 at $k_2$, then, as we will show, the expected mass of type $i$ at
time $s$ at site $m$ is $\SG_s(m,k_i)$. Recall that the types exclude
each other at any given site. If type $i$ is absent at $m$ at time $s$,
then the infinitesimal impact of type $i$ at site $m$ is governed by
the immigration at rate $\TM(m,l)$ from the other sites $l\in S$; that
is, it is of order $\TM\SG_s(m,k_i)$. Summing over all sites, we see
that the expected total ``activity'' is of order $\SISG_s(k_1,k_2)$.
Since the interaction of types has infinite variance, it is not $\SISG
$ itself that is the crucial quantity, but rather we will see that we
will need a logarithmic correction term.
In order quantify the total amount of interaction in the ``transient''
case, we define
%
%
\begin{equation}
\label{E5.4}
\SISGlog(k,l):=\int_0^\infty\SISG_s(k,l)\bigl (1+|\log(\SISG
_s(k,l))| \bigr)\,ds.
\end{equation}
It is easy to check that
\[
\SISGlog(k,l)\geq\int_0^\infty\SISG_s(k,l)\,ds = \bar G(k,l)-\1
_{\{k=l\}} .
\]
Hence, $\SISGlog$ is infinite if $\bar G$ is infinite.

Define the total mass process
%
%
\begin{equation}
\label{Etmp}
M_{i,t}:=\langle X_{i,t},1\rangle
\end{equation}
and assume that $M_0\in[0,\infty)^2$.
Recall the martingales $\MM_i(k)$ from (\ref{Emar}) and note that
$M_{i,t}=\sum_{k\in S}\MM_{i,t}(k)$ since $\sum_{k\in S}\CA(k,l)=0$
for all $l\in S$. Hence, $M_1$ and $M_2$ are
is a nonnegative martingales and therefore the almost sure limits
\[
Z_i:=\lim_{t\to\infty}M_{i,t}
\]
exist, $i=1,2$.

Recall that $\TMtrans$ is a stochastic matrix and that $\CA(k,l)=\TM
(k,l)-\1_{\{k=l\}}$.

\begin{theorem}
\label{T1}
Let $X$ be the infinite rate mutually catalytic branching process with
kernel $\TM$. Assume that the initial state is given by
\[
X_{1,0}=\1_{\{l_1\}}\quad\mbox{and}\quad X_{2,0}=\1_{\{l_2\}}
\]
for some $l_1,l_2\in S$, $l_1\neq l_2$.

If $l_1$ and $l_2$ are such that $\SISGlog(l_1,l_2)$ is
sufficiently small, then there is coexistence of types in the
longtime limit; that is, $\P[Z_1>0, Z_2>0]>0$.
\end{theorem}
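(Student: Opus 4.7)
The plan is to prove coexistence via a truncation and second-moment (Chebyshev) argument for the total mass martingales $M_1, M_2$, carried out inside the Trotter approximation $X^\varepsilon$ of \cite{KO} and then passed to the limit $\varepsilon \to 0$. Recall that in $X^\varepsilon$, between successive lattice times $n\varepsilon$ the state evolves by the migration semigroup (which conserves total mass), and at each $n\varepsilon$ every colony $k$ independently jumps from its pre-jump state $(x_1^-(k), x_2^-(k))$ to the exit position of planar Brownian motion from $[0,\infty)^2$ started there.

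The starting point is the exact jump covariance at a single colony. Since $\E[x_i^+ \mid \CF_-] = x_i^-$ (martingale property of the planar BM exit) and $x_1^+ x_2^+ = 0$ (the exit lies on the boundary of the quadrant), I get $\E[(\Delta x_1)(\Delta x_2)\mid\CF_-] = -x_1^- x_2^-$. Summing over colonies and lattice times (jumps at different colonies are independent) yields
\[
\E\big[M_{1,t}^\varepsilon M_{2,t}^\varepsilon\big] \;=\; 1 \;-\; \sum_{n:\, n\varepsilon \leq t}\, \E\Big[\sum_{k \in S} X^\varepsilon_{1, n\varepsilon-}(k)\, X^\varepsilon_{2, n\varepsilon-}(k)\Big].
\]
An analysis of the cross-moment $f_\varepsilon(l, m, t) := \E[X^\varepsilon_{1, t}(l) X^\varepsilon_{2, t}(m)]$, which satisfies $\partial_t f_\varepsilon = (\CA \otimes I + I \otimes \CA) f_\varepsilon$ between jumps and is killed on the diagonal $\{l=m\}$ at each $n\varepsilon$ (because the post-jump state lies in $E^S$), identifies the interaction term in the limit $\varepsilon \to 0$ with the total expected number of collisions of two independent $\CA^T$-chains started at $l_1, l_2$ -- an object dominated, up to a universal constant, by $\int_0^\infty \hat a_s(l_1, l_2)\, ds$.

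This bound is not yet sufficient, for two related reasons: it produces only $\int \hat a_s\, ds$ rather than $\hat G_{\log}$, and, more seriously, the jump distribution of $x_i^+$ is Cauchy-like (one verifies via the conformal map $z \mapsto z^2$ to the upper half-plane that $x_i^+$ has a density with $t^{-3}$ tails), so $M_i$ lies only in $L^p$ for $p<2$ and $M_1 M_2$ need not be uniformly integrable. Both issues can be addressed simultaneously by truncating each site-wise jump at a level $N$ that balances the Cauchy tail $\P[x_i^+ > N\mid\CF_-] \lesssim x_1^- x_2^-/N^2$ against the truncated second moment $\E[(x_i^+)^2 \1_{\{x_i^+ \leq N\}}\mid\CF_-] \lesssim x_1^- x_2^- \log N$: the natural choice $N \sim (x_1^- x_2^-)^{-1/2}$ produces an extra factor $\log N \sim 1 + |\log(x_1^- x_2^-)|$, which is exactly the origin of the logarithmic correction in $\hat G_{\log}$. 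Relating the site-wise sums $\sum_k x_1^-(k) x_2^-(k)$ at time $s$ to the infinitesimal collision rate $\hat a_s(l_1, l_2)$, a careful bookkeeping shows that, for the truncated total mass process $\tilde M^\varepsilon_i$ and the event $A_\varepsilon = \{\text{no jump is actually truncated}\}$,
\[
\P[A_\varepsilon^c] \;\leq\; c\, \hat G_{\log}(l_1, l_2), \qquad \Var\big(\tilde M^\varepsilon_{i, t}\big) \;\leq\; c\, \hat G_{\log}(l_1, l_2),
\]
uniformly in $t$ and $\varepsilon$. Doob's $L^2$ inequality and a union bound then give
\[
\P\Big[\,\inf_{t \geq 0} M^\varepsilon_{1, t} \geq \tfrac12,\ \inf_{t \geq 0} M^\varepsilon_{2, t} \geq \tfrac12\,\Big] \;\geq\; 1 - c'\, \hat G_{\log}(l_1, l_2),
\]
which is strictly positive for $\hat G_{\log}(l_1, l_2)$ sufficiently small. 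Passing to the limit $\varepsilon \to 0$ via the convergence of \cite{KO} and then $t \to \infty$ using $M_{i, t} \to Z_i$ almost surely, one concludes $\P[Z_1 \geq \tfrac12,\, Z_2 \geq \tfrac12] > 0$, proving the coexistence claim.

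The hardest part is the bookkeeping in the third paragraph: extracting exactly $\hat G_{\log}$ as a common upper bound on both $\P[A_\varepsilon^c]$ and $\Var(\tilde M^\varepsilon_{i, t})$. This requires genuine two-colony correlation estimates for $X^\varepsilon$ -- the naive first-moment factorization $\E[X^\varepsilon_{1,s}(l) X^\varepsilon_{2,s}(m)] \approx a_s(l, l_1) a_s(m, l_2)$ ignores precisely the diagonal killing that delivers the collision kernel -- together with a delicate interplay between the state-dependent truncation level $N$, the site-dependent pre-jump intensities, and the rate $\hat a_s$. A secondary obstacle is the limit $\varepsilon \to 0$: one must verify that the quantitative coexistence estimate for $X^\varepsilon$ survives, which rests on tightness of the truncated approximations and some uniform $L^1$ control over the relevant functionals in $\varepsilon$.
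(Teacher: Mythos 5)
The overall architecture of your argument---truncate the jumps, pass to second moments, and conclude coexistence by a Chebyshev-type bound---matches the paper's strategy in Section~2, and you correctly identify the crucial tension (the exit law has Cauchy tails, so $M_i$ is only $L^p$-integrable for $p<2$, and the $\log$ correction in $\hat G_{\log}$ arises from balancing a tail probability against a truncated second moment). However, the specific truncation you propose does not work as stated and the hardest step is left open.

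\textbf{The truncation breaks the martingale property.} Hard-capping the post-jump coordinate at a state-dependent level $N(x_1^-,x_2^-)\sim (x_1^-x_2^-)^{-1/2}$ makes $\tilde M^\varepsilon_i$ a strict supermartingale: since the exit density has $\bar u^{-3}$ tails, the truncated mean loses a mass of order $x_1^-x_2^-/N\sim (x_1^-x_2^-)^{3/2}$ per jump, and this drift is not obviously negligible after summing over sites and lattice times. Your Doob-$L^2$ step requires $\E[\tilde M^\varepsilon_{i,t}]=1$ (or at least $\geq 1-\eta$ uniformly) and a genuine martingale structure to pass from a variance bound to a $\sup_t$ bound, and this is simply not available for a hard cap. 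The paper's fix is different and cleaner: it stops the \emph{underlying planar Brownian motion} at $\partial[0,K]^2$ for a fixed $K$. Optional stopping then gives exact mean preservation (Lemma~\ref{L2.1.0}), and the state-dependent $\log$ factor you want appears for free from the harmonic analysis of the stopped exit position: Proposition~\ref{P2.1.3} yields $\Var_{(u,v)}[B_{i,\tau_K}]\leq 8uv\,[1+\log K + \log(1/u)\wedge\log(1/v)]$. So the log does not come from tuning a truncation level per site---it is intrinsic to the exit variance.

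\textbf{The two-colony correlation estimate is circumvented, not solved.} You rightly flag the ``careful bookkeeping'' needed to relate $\sum_k X^{}_{1,s}(k)X^{}_{2,s}(k)$ to $\hat a_s(l_1,l_2)$ as the hardest step, noting that a naive factorization ignores the diagonal killing. The paper avoids a two-particle PDE analysis entirely. Because $M^K_1(k)$ and $M^K_2(l)$ are \emph{orthogonal} square-integrable martingales, representing $X^K_{i,s}(k)$ as $\sum_l\int_0^s a_{s-r}(k,l)\,dM^K_{i,r}(l)$ shows that $X^K_{1,s}(k)$ and $X^K_{2,s}(l)$ are \emph{uncorrelated}, and then Lemma~\ref{L2.4.1} (a concavity/negative-correlation inequality, applied to the concave function $h_K$ bounding the exit variance) plus Jensen gives directly $\Var[M^K_{i,t}]\leq 8K\log K\,\hat G_{\log}(l_1,l_2)$. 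This replaces your open correlation analysis with a two-line algebraic argument.

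\textbf{The end-game is different and must account for the stopping event.} The paper's truncation stops being identical to $X$ on $\{\tau_K<\infty\}$, so one cannot simply say ``coexistence of $X^K$ implies coexistence of $X$.'' Orthogonality gives $\E[M^K_{1,\infty}M^K_{2,\infty}]=1$ exactly, and the paper shows $\E[M^K_{1,\infty}M^K_{2,\infty}\1_{\{\tau_K=\infty\}}]>0$ by bounding the cross terms $\E[M^K_{1,\infty}\bar M^K_2]$ via Cauchy--Schwarz using the variance bound and $\P[\tau_K<\infty]\leq 2/K$. Your union-bound-plus-Doob ending is in the right spirit but needs both the martingale property (see above) and an explicit account of the coupling failure event, and neither is supplied. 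In summary: same overall strategy, correct heuristic for the origin of $\hat G_{\log}$, but the truncation mechanism as proposed is incompatible with the martingale analysis you then invoke, and the central variance/correlation estimate is acknowledged but not closed.
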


\begin{theorem}
\label{T2}
Assume that $\TM$ fulfills
%
%
\begin{equation}
\label{E1.30}
\sup_{k\in S}\TM(k,k)<1
\end{equation}
and
%
%
\begin{equation}
\label{E1.10.1}
c:=\inf_{k, l\in S}\liminf_{t\to\infty} \frac{\bar G_t(k,l)}{\bar
G_t^*}>0.
\end{equation}
Then coexistence of types is impossible; that is, if $M_{0}\in
[0,\infty)^2$, then
\mbox{$Z_1Z_2=0$} almost surely.
\end{theorem}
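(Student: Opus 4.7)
The plan is to show $\E[M_{1,t}M_{2,t}]\to 0$ as $t\to\infty$; since $M_{1,t}M_{2,t}\to Z_1Z_2$ almost surely (the convergence of the non-negative, $L^p$-bounded-for-$p<2$ martingales $M_1,M_2$ from \equ{Emar}), Fatou's lemma will then force $Z_1Z_2=0$ a.s., which is the claim. Accordingly, the first step is to derive a rate equation for $\E[M_{1,t}M_{2,t}]$. Setting $u_t(k,m):=\E[X_{1,t}(k)X_{2,t}(m)]$, the pointwise exclusion built into $E^S$ gives the Dirichlet condition $u_t(k,k)=0$ for every $t,k$. For $k\ne m$, It\^o's formula for the product $X_{1,t}(k)X_{2,t}(m)$ together with \equ{Emar} yields
\[
\partial_t u_t(k,m)=(\CA u_t(\cdot,m))(k)+(\CA u_t(k,\cdot))(m),
\]
since the pure-jump martingale parts $\MM_i$ are localized at single sites and therefore satisfy $[\MM_1(k),\MM_2(m)]\equiv 0$ for $k\ne m$. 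This is precisely the forward equation for two independent chains with $q$-matrix $\CA^T$, absorbed on the diagonal, so
\[
u_t(k,m)=\sum_{k_0,m_0}X_{1,0}(k_0)X_{2,0}(m_0)\,\P_{k_0,m_0}\!\bigl[\xi^1_t=k,\;\xi^2_t=m,\;\tau>t\bigr],
\]
with $\tau$ the first meeting time of the independent $\CA^T$-chains $\xi^1,\xi^2$ started at $(k_0,m_0)$.

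Next I would sum the PDE against the constant function. Using $\sum_k\CA(k,l)=0$ and $u_t(k,k)=0$ one obtains
\[
\tfrac{d}{dt}\E[M_{1,t}M_{2,t}]=-\sum_{k,m\in S}(a+a^T)(k,m)\,u_t(k,m),
\]
so that $M_1M_2$ is in particular a supermartingale. The key observation is that the kernel $(a+a^T)(k,m)$ on the off-diagonal is exactly the total rate at which the bivariate chain $(\xi^1,\xi^2)$ at state $(k,m)$ jumps to the diagonal (via either coordinate). The compensator identity $\P_{k_0,m_0}[\tau\le t]=\E_{k_0,m_0}\bigl[\int_0^{\tau\wedge t}(a+a^T)(\xi^1_s,\xi^2_s)\,ds\bigr]$ combined with Fubini then gives, after integrating in time,
\[
M_{1,0}M_{2,0}-\lim_{t\to\infty}\E[M_{1,t}M_{2,t}]=\sum_{k_0\ne m_0}X_{1,0}(k_0)X_{2,0}(m_0)\,\P_{k_0,m_0}[\tau<\infty].
\]
Because $X_{1,0}(k)X_{2,0}(k)=0$ by the initial exclusion, the unrestricted sum over $k_0,m_0$ equals $M_{1,0}M_{2,0}$, and the theorem reduces to showing $\P_{k_0,m_0}[\tau<\infty]=1$ for every $k_0\ne m_0$.

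The main obstacle of the proof is this last step. By \equ{E5.2} one has $\P_{k_0,m_0}[\xi^1_s=\xi^2_s]=\bar a_{2s}(k_0,m_0)$, and hence $\E_{k_0,m_0}[\text{time the chains spend together}]=\tfrac12\bar G(k_0,m_0)$. In the non-vacuous regime for Theorem~\ref{T2} one has $\bar G_t^*\to\infty$, whereupon \equ{E1.10.1} upgrades this to $\bar G(k,l)=\infty$ uniformly in $(k,l)$. I would then argue almost-sure meeting by a strong-Markov chaining: each prospective visit of $(\xi^1,\xi^2)$ to the diagonal at a state $(m,m)$ is followed by an independent copy of the bivariate dynamics with infinite expected future time on the diagonal (since $\bar G(m,m)=\infty$ uniformly in $m$), while the uniform jump-rate lower bound $1-\sup_ka(k,k)>0$ from \equ{E1.30} rules out the pathology that the chains get pinned at a single off-diagonal state. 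A 0--1 argument along these excursions forces an infinite number of meetings and in particular $\P_{k_0,m_0}[\tau<\infty]=1$ for all $k_0\ne m_0$. Plugging this into the identity above gives $\lim_{t\to\infty}\E[M_{1,t}M_{2,t}]=0$ and Fatou's lemma concludes $Z_1Z_2=0$ almost surely. The delicate point, and the reason both hypotheses \equ{E1.30} and \equ{E1.10.1} appear together, is precisely the passage from infinite expected occupation of the diagonal to almost-sure hitting: in the absence of translation invariance of $a$, one needs the uniformity in \equ{E1.10.1} to propagate the divergence of $\bar G$ across all pairs simultaneously.
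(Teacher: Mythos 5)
Your strategy is genuinely different from the paper's. You reduce the claim to $\E[M_{1,t}M_{2,t}]\to 0$ and then to the hitting statement $\P_{k_0,m_0}[\tau<\infty]=1$ for the bivariate $\CA^T$-chain; the paper instead works with the truncated process $X^K$ and derives a contradiction from the conditional quadratic variation $\langle M_1^K\rangle$ (which must converge because $M_1^K$ is a bounded-jump convergent martingale, yet is forced to grow like $\bar G^*_{2t}$ on the putative coexistence event). Your route is arguably more transparent if it can be completed, but it currently has two genuine gaps.

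The main gap is the passage from $\bar G(k,l)=\infty$ to $\P_{k_0,m_0}[\tau<\infty]=1$. Infinite expected occupation time of a set does \emph{not} in general imply a.s.\ hitting of that set: the identity $\bar G(k,l)=\P_{k,l}[\tau<\infty]\,\E_{k,l}[\mbox{occ.\ after }\tau\mid\tau<\infty]$ can be satisfied with $\P_{k,l}[\tau<\infty]<1$ if the conditional expected occupation after hitting is itself infinite, which it is here since $\bar G(m,m)=\infty$ for all $m$. The ``0--1 argument along excursions'' you sketch is circular, since it presupposes visits to the diagonal, which is precisely what you need to establish from an off-diagonal start. Also, the appeal to \equ{E1.30} (``rules out the chains getting pinned at a single off-diagonal state'') does no work: the $\CA^T$-chain jumps at rate $1$ regardless of $a(k,k)$. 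A correct argument does exist and uses \equ{E1.10.1} more directly: by the strong Markov property at $\tau$ and since the occupation of the diagonal before $\tau$ is zero,
\begin{equation*}
\bar G_{2t}(k,l)=\E_{k,l}\big[\1_{\{\tau<t\}}\,\bar G_{2(t-\tau)}(\xi^1_\tau,\xi^1_\tau)\big]\le \bar G^*_{2t}\,\P_{k,l}[\tau<t],
\end{equation*}
whence $\P_{k,l}[\tau<\infty]\ge \liminf_t \bar G_t(k,l)/\bar G_t^*\ge c>0$ uniformly in $k\neq l$, and then the Markov property at a deterministic time $T$ gives $\P_{k,l}[\tau=\infty]\le(1-c)\P_{k,l}[\tau>T]\to(1-c)\P_{k,l}[\tau=\infty]$, forcing $\P_{k,l}[\tau=\infty]=0$. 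If you adopt this argument, you should notice it does not invoke \equ{E1.30} at all, which is itself a point worth scrutinizing against the theorem as stated.

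The secondary gap is the derivation of the ODE for $\E[M_{1,t}M_{2,t}]$. You apply the product rule to $X_{1}(k)X_{2}(m)$ and take expectations, implicitly assuming the stochastic integrals $\int X_{1,s-}(k)\,d\MM_{2,s}(m)$ are true martingales with zero mean. Because $\MM_i(k)$ is only in $L^p$ for $p<2$ (see \equ{Emar}) and the integrand $X_{1,s}(k)$ is not bounded, neither BDG nor the standard $L^2$ criterion applies directly, and you have not justified that these local martingales are closable. The paper sidesteps exactly this difficulty by first truncating (yielding $L^2$ martingales with computable, bounded quadratic variation) and only then passing to the limit on the event $\{\tau^K=\infty\}$; if you want to keep your first-moment route, you would need a similar truncation-and-comparison step (or a careful localization argument using the supermartingale bound $\E[M_{1,t}M_{2,t}]\le M_{1,0}M_{2,0}$) before the identity $\E[M_{1,t}M_{2,t}]=\sum_{k_0,m_0}X_{1,0}(k_0)X_{2,0}(m_0)\P_{k_0,m_0}[\tau>t]$ is legitimate.
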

\begin{remark}
(i)
Note that in the case where $\TM=\TMtrans$, condition (\ref{E1.10.1})
implies irreducibility of $\TM$. Furthermore, if $\TM=\TMtrans$ is
recurrent and irreducible, due to the strong Markov property, (\ref
{E1.10.1}) is equivalent to (\ref{E1.10}).\vadjust{\goodbreak}

(ii)
Assume that $S$ is an Abelian group and $\TM$ is a random walk kernel.
In this case, (\ref{E1.10.1}) is equivalent to
$\bar\TM$ being irreducible and recurrent. In particular, (\ref
{E1.10.1}) holds if $\TM$ is irreducible and recurrent.

(iii)
For symmetric simple random walk on the $d$-dimensional integer lattice
$\Z^d$, $d\geq3$, it is simple to show (using the CLT), that
$\SISGlog(l_1,l_2)\approx c_d \|l_1-l_2\|^{2-d}$ as $\|l_1-l_2\|\to
\infty$, for some constant $c_d\in(0,\infty)$. Hence, the
assumptions of Theorem~\ref{T1} are fulfilled for simple random walk
with the two populations being sufficiently far apart.
\end{remark}

The proofs in \cite{DawsonPerkins1998} heavily rely on second moment methods.
The main difficulty in the proofs here is the lack of second moments.
(For this reason, presumably the statement of Theorem~\ref{T1} fails
under the weaker assumption that only $\bar G<\infty$.) The strategy
of proof for Theorem~\ref{T1} is therefore to introduce for $K>0$ an auxiliary
process $X^K$ whose jumps in each coordinate are suppressed when
they lead out of the square $[0,K]^2$. This is done in such a way
that the coordinate processes (minus the drifts) become square
integrable orthogonal martingales. For these martingales, we use
the conditions on $\SISGlog$ to estimate the conditional quadratic
variation process.

The proof of Theorem~\ref{T2} also uses the auxiliary process $X^K$
and its conditional quadratic variation process, but the arguments are
more involved.

\subsection{Organization of the paper}

In Section~\ref{S2}, we prove Theorem~\ref{T1}. First, we derive
basic properties of Brownian motion in $[0,K]^2$ stopped upon hitting
the boundary such as hitting distribution, moments and so on.
Then we construct the auxiliary process $X^K$ and conclude Theorem~\ref
{T1} via second moment estimates.

In Section~\ref{S3}, we prove Theorem~\ref{T2}.

\section{\texorpdfstring{Coexistence of types, proof of Theorem~\protect\ref{T1}}{Coexistence of types, proof of Theorem 1}}
\label{S2}
In Section~\ref{S2.1}, we perform some preliminary calculations for the
variance of planar Brownian motion in $[0,K]^2$ stopped upon hitting
the boundary. In
Section~\ref{S2.2}, we construct the auxiliary process $X^K$. In
Section~\ref{S2.3},
we show that the coordinates of~$X^K$ are orthogonal martingales
and we compute their conditional quadratic variation. In the final
Section~\ref{S2.4}, we put the ends together to conclude the
proof of Theorem~\ref{T1}.

\subsection{Brownian motion in a square}
\label{S2.1}
Denote by $Q$ the harmonic measure of planar Brownian motion in
$[0,\infty)^2$. That is, if $B=(B_1,B_2)$ is a Brownian
motion in $\R^2$ started at $x\in[0,\infty)^2$ and
\[
\tau_0:=\inf\{t>0\dvtx  B_t\notin(0,\infty)^2\},
\]
then we define
%
%
\begin{equation}
\label{E2.2}
Q_x:=\P_x[B_{\tau_0} \in\ARG].
\end{equation}
If $x=(u,v)\in(0,\infty)^2$, then the harmonic measure $Q_x$ has a
one-dimensional Lebesgue density on $E$
that can be computed explicitly:
%
%
\begin{equation}
\label{E2.3}
Q_{(u,v)} (d(\bar u,\bar v) )=
\cases{\displaystyle
\frac4\pi
\frac{uv \bar u}{4u^2v^2+ (\bar u^2+ v^2-u^2
)^2}\,d\bar u,
&\quad if $\bar v=0$,\vspace*{3pt}\cr\displaystyle
\frac4\pi
\frac{uv \bar v}
{4u^2v^2+ (\bar v^2+u^2-v^2 )^2}\,d\bar v,
&\quad if $\bar u=0$.
}
\end{equation}
Furthermore, trivially, we have that
$Q_x=\delta_x$ if $x\in E$.

In \cite{KM1}, it is shown that for $x\in(0,\infty)^2$ and $p\in
(0,2)$, we have
%
%
\begin{equation}
\label{E2.01.1}
\int y_i Q_x(dy)=x_i
\end{equation}
and
%
%
\begin{equation}
\label{E2.01.2}
\E_x [\tau_0^{p/2} ]<\infty.
\end{equation}
Applying the Burkholder--Davis--Gundy inequality, this implies that
%
%
\begin{equation}
\label{E2.01.3}
\int y_i^p Q_x(dy)<\infty.
\end{equation}
However, for $p=2$, we have
%
%
\begin{equation}
\label{E2.01.4}
\int y_i^2 Q_x(dy)=\infty.
\end{equation}

Now consider planar Brownian motion $B$ on $[0,K]^2$ and its harmonic
measure $Q^K$ defined by
\[
Q^K_x:=\P_x[B_{\tau_K}\in\ARG],
\]
where
\[
\tau_K:=\inf\{t>0\dvtx  B_t\notin(0,K)^2 \}.
\]
Due to the obvious scaling, we can restrict ourselves mostly to $K=1$.
For simplicity, we define $\tau:=\tau_1$.
\begin{lemma}
\label{L2.1.0}
For all $x\in[0,1]^2$ and $i=1,2$, we have
%
%
\begin{equation}
\label{EL2.1.0.0}
\int y_i Q^1_x(dy) = x_i.
\end{equation}
Furthermore,
%
%
\begin{equation}
\label{EL2.1.0.1}
V(x):=\E_x[\tau]=\int(y_i-x_i)^2 Q^1_x(dy)
\end{equation}
and
%
%
\begin{equation}
\label{EL2.1.0.2}
\int(y_1-x_1)(y_2-x_2) Q^1_x(dy)=0.
\end{equation}
\end{lemma}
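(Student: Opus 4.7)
The three identities are all of the form ``apply optional stopping to a well-known Brownian martingale at the bounded exit time $\tau$,'' so the plan is essentially to list the correct martingale for each identity and verify that the requisite integrability hold.

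The plan is to first note that since $\tau$ is the exit time of planar Brownian motion from the bounded open square $(0,1)^2$, standard estimates (comparison with the exit time of one coordinate from $(0,1)$, for instance) give that $\E_x[\tau^p]<\infty$ for every $p\geq 1$, uniformly in $x\in[0,1]^2$. In addition, for all $t\geq 0$ and $i=1,2$ we have $B_{i,\tau\wedge t}\in[0,1]$, so each coordinate of the stopped process is uniformly bounded. This supplies all the integrability needed to justify optional stopping below.

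For \equ{EL2.1.0.0}, I would apply optional stopping to the bounded martingale $(B_{i,\tau\wedge t}-x_i)_{t\geq 0}$ and let $t\to\infty$; uniform boundedness and $\tau<\infty$ a.s.\ give $\E_x[B_{i,\tau}]=x_i$. For \equ{EL2.1.0.1}, I would use the martingale $(B_{i,t}-x_i)^2-t$: the stopped version $(B_{i,\tau\wedge t}-x_i)^2-(\tau\wedge t)$ is dominated in absolute value by $1+\tau$, which is in $L^1$, so dominated convergence together with optional stopping give
\[
\E_x\!\left[(B_{i,\tau}-x_i)^2\right]\;=\;\E_x[\tau]\;=\;V(x),
\]
and in particular the right hand side of \equ{EL2.1.0.1} is independent of $i\in\{1,2\}$. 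For the orthogonality relation \equ{EL2.1.0.2}, I would use independence of $B_1$ and $B_2$: the product $(B_{1,t}-x_1)(B_{2,t}-x_2)$ is a martingale (product of independent centered martingales) that vanishes at $t=0$, and is uniformly bounded by $1$ on $[0,\tau]$, so optional stopping immediately yields the claim.

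The only points that really need any care are the uniform integrability/domination of each stopped martingale, and both follow from the pair of facts that $B_{\tau\wedge t}\in[0,1]^2$ and $\E_x[\tau]<\infty$. There is no substantive obstacle here; the lemma is a direct corollary of three standard Brownian martingales combined with boundedness of the domain.
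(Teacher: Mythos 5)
Your proposal is correct and takes essentially the same approach as the paper: apply optional stopping to the standard bounded Brownian martingales $B_{i,\cdot\wedge\tau}$, $(B_{i,\cdot\wedge\tau}-x_i)^2-(\cdot\wedge\tau)$, and the centred product martingale, with the integrability checks supplied by boundedness of the domain and $\E_x[\tau]<\infty$. The paper states the same argument in a more compressed form (citing that $t\wedge\tau$ is the quadratic variation and that $B_{1,\cdot\wedge\tau}B_{2,\cdot\wedge\tau}$ is a bounded martingale); your version simply spells out the uniform-integrability justification.
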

\begin{pf}
$(B_{i,t\wedge\tau})_{t\geq0}$ is a bounded martingale and $t\wedge
\tau$ is its quadratic variation.
Hence, (\ref{EL2.1.0.0}) and (\ref{EL2.1.0.1}) are simple
consequences of
the optional stopping theorem for martingales. Similarly, (\ref
{EL2.1.0.2}) follows from the fact that the product $(B_{1,t\wedge\tau
}B_{2,t\wedge\tau})_{t\geq0}$ is a bounded martingale.
\end{pf}
\begin{lemma}
\label{L2.1.1}
For $x=(u,v)\in[0,1]^2$, $V(u,v)$ has the Fourier expansion
%
%
\begin{equation}
\label{EBM.02}
\E_{(u,v)}[\tau]=V(u,v)=
\sum_{m=0}^\infty
\sum_{n=0}^\infty c_{m,n} \sin\bigl((2m+1)\pi u \bigr) \sin
\bigl((2n+1)\pi
v \bigr),\hspace*{-35pt}
\end{equation}
where
%
%
\begin{equation}
\label{EBM.01}
c_{m,n}:=\frac{32}{\pi^4}\frac{\fracc{1}{2m+1}\fracc
{1}{2n+1}}{(2m+1)^2+(2n+1)^2}\qquad\mbox{for all }m,n\in\N_0.
\end{equation}
\end{lemma}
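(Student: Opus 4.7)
The plan is to identify $V(x)=\E_x[\tau]$ as the unique bounded solution of a Dirichlet Poisson problem on the square and then compute its Fourier expansion in the Dirichlet eigenbasis of the Laplacian on $(0,1)^2$.

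First I would invoke the standard connection between exit times and elliptic PDEs (Dynkin's formula applied to suitable $C^2$ test functions, plus regularity of the boundary): $V$ is continuous on $[0,1]^2$, smooth on $(0,1)^2$, vanishes on $\partial[0,1]^2$, and satisfies
\begin{equation*}
\tfrac12\Delta V(u,v)=-1\mf (u,v)\in(0,1)^2.
\end{equation*}
Uniqueness in the class of bounded solutions follows from the maximum principle (or directly from the probabilistic representation).

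Next I would expand in the orthonormal basis $\{2\sin(M\pi u)\sin(N\pi v):M,N\geq 1\}$ of $L^2([0,1]^2)$, whose members are eigenfunctions of $-\tfrac12\Delta$ with eigenvalues $\tfrac{\pi^2}{2}(M^2+N^2)$ and satisfy zero Dirichlet boundary conditions. The Fourier coefficients of the constant function $1$ in this basis are
\begin{equation*}
4\int_0^1\!\!\int_0^1 \sin(M\pi u)\sin(N\pi v)\,du\,dv
=\frac{4(1-(-1)^M)(1-(-1)^N)}{MN\pi^2},
\end{equation*}
which vanishes unless both $M=2m+1$ and $N=2n+1$ are odd, in which case it equals $\frac{16}{\pi^2(2m+1)(2n+1)}$. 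Dividing by the eigenvalue $\tfrac{\pi^2}{2}((2m+1)^2+(2n+1)^2)$ yields exactly the claimed $a_{m,n}$ of \equ{EBM.01}.

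Finally I would verify that the resulting formal series $\widetilde V(u,v):=\sum_{m,n}a_{m,n}\sin((2m+1)\pi u)\sin((2n+1)\pi v)$ does represent $V$. Since $|a_{m,n}|=O\!\big((mn(m^2+n^2))^{-1}\big)$, the series and its term-by-term second partial derivatives converge absolutely and uniformly on $[0,1]^2$, so $\widetilde V\in C^2((0,1)^2)\cap C([0,1]^2)$, vanishes on the boundary, and $\tfrac12\Delta\widetilde V=-1$ by construction. Uniqueness from the first step gives $\widetilde V\equiv V$, completing the proof. The only non-routine point to watch is the termwise differentiation of the double series, but the explicit decay of $a_{m,n}$ makes this standard; no other obstacles arise.
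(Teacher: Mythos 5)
Your overall strategy is the same as the paper's: identify $V=\E_\cdot[\tau]$ as the unique solution of the Dirichlet--Poisson problem $\tfrac12\Delta g=-1$ on $(0,1)^2$ with zero boundary data, and then match the proposed sine series against this characterization. The one cosmetic difference is that you \emph{derive} the coefficients $a_{m,n}$ by expanding the constant $1$ in the Dirichlet eigenbasis and dividing by eigenvalues, whereas the paper \emph{verifies} the given expansion by computing $\tfrac12\Delta g$ directly and recognizing the product of two one-dimensional Fourier series of the constant $1$. These are two faces of the same computation.

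There is, however, a genuine error in your convergence justification. You claim that, because $|a_{m,n}|=O\big((mn(m^2+n^2))^{-1}\big)$, ``the series and its term-by-term second partial derivatives converge absolutely and uniformly.'' The series for $\widetilde V$ itself does converge absolutely (indeed $\sum_{m,n}|a_{m,n}|<\infty$), but the termwise $\partial_u^2$ series does not: the $(m,n)$-th coefficient acquires a factor $(2m+1)^2\pi^2$, so its magnitude is of order
$$\frac{(2m+1)}{(2n+1)\big((2m+1)^2+(2n+1)^2\big)},$$
which fails to be summable (each dyadic block $m\sim n\sim N$ contributes $\asymp N^2\cdot N^{-2}=1$, giving a logarithmic divergence). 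So the differentiated double series converges at best conditionally, and your stated reason for being allowed to differentiate termwise is false. To close the gap you must either (a) factor the termwise-Laplacian into the product of two one-dimensional conditionally convergent sine series for $1$, as the paper does, and argue the conditional convergence there; or (b) interpret $-\tfrac12\Delta\widetilde V=1$ in the weak sense (which follows at once from matching Fourier coefficients, since $\widetilde V\in H^1_0$), and then invoke interior elliptic regularity plus the maximum-principle uniqueness to conclude $\widetilde V=V$. Either fix is standard, but as written the justification is incorrect.
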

\begin{pf}
It is well known that $V$ is the unique solution of
the Poisson equation with Dirichlet boundary condition
%
%
\begin{eqnarray}
\label{EDirProb}
\tfrac12\Delta g&=&-1\qquad\mbox{in }(0,1)^2,\nonumber
\\[-8pt]
\\[-8pt]
g&=&0\qquad\mbox{at }\partial(0,1)^2.
\nonumber
\end{eqnarray}
Denote by $g(u,v)$ the right-hand side in (\ref{EBM.02}). Clearly, $g=0$
at $\partial(0,1)^2$. Furthermore, for
$u,v\in(0,1)$,
%
%
\begin{eqnarray}
\label{EBM.03}
 \quad \frac12\Delta g(u,v)&=&-
\biggl(\frac{4}{\pi}\sum_{m=0}^\infty
\frac{\sin((2m+1)\pi u )}{2m+1} \biggr)
\biggl(\frac{4}{\pi}\sum_{n=0}^\infty
\frac{\sin((2n+1)\pi v )}{2n+1} \biggr)\nonumber\hspace*{-35pt}
\\[-8pt]
\\[-8pt]
 \quad &=&-1,
\nonumber\hspace*{-35pt}
\end{eqnarray}
where the last equality follows from the fact that each factor is the
Fourier series of the function on
$(0,1)$ that is constant $1$. Hence, we have
$g=V$.
\end{pf}
\begin{corollary}
\label{C2.1.2}
For planar Brownian motion $B$ in $[0,K]^2$, we have
\[
\E_{(u,v)}[\tau_K]=K^2 V (u/K, v/K )
\]
and
\[
\Cov_{(u,v)}[B_{i,\tau_K},B_{j,\tau_K}]=K^2 V (u/K, v/K )
\1_{\{i=j\}}.
\]
\end{corollary}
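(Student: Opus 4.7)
The plan is to reduce Corollary~\ref{C2.1.2} to the $K=1$ case treated in Lemma~\ref{L2.1.0} by Brownian scaling. Specifically, if $B$ is a planar Brownian motion on $[0,K]^2$ started at $(u,v)$, I would introduce the rescaled process
\[
\tilde B_t := \frac{1}{K}B_{K^2 t},\qquad t\geq 0,
\]
which, by the standard scaling property of Brownian motion, is a planar Brownian motion started at $(u/K, v/K)$ and lives in $[0,1]^2$. Writing $\tilde\tau := \inf\{t>0 : \tilde B_t \notin (0,1)^2\}$, the identities $\tilde B_t \in (0,1)^2 \iff B_{K^2 t} \in (0,K)^2$ immediately give
\[
\tau_K = K^2\,\tilde\tau \quad\text{and}\quad B_{i,\tau_K} = K\,\tilde B_{i,\tilde\tau}.
\]

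Taking expectations in the first identity and applying \equ{EL2.1.0.1} to $\tilde B$ under $\P_{(u/K,v/K)}$ yields
\[
\E_{(u,v)}[\tau_K]=K^2\,\E_{(u/K,v/K)}[\tilde\tau]=K^2\,V(u/K,v/K),
\]
which is the first claim. For the covariance, the second identity gives
\[
\Cov_{(u,v)}\bigl[B_{i,\tau_K},B_{j,\tau_K}\bigr]=K^2\,\Cov_{(u/K,v/K)}\bigl[\tilde B_{i,\tilde\tau},\tilde B_{j,\tilde\tau}\bigr].
\]
By Lemma~\ref{L2.1.0}, under $\P_{(u/K,v/K)}$ the exit point $\tilde B_{\tilde\tau}$ has mean $(u/K,v/K)$, the variance of each coordinate equals $V(u/K,v/K)$ by \equ{EL2.1.0.1}, and the two coordinates are uncorrelated by \equ{EL2.1.0.2}. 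Substituting this in gives exactly $K^2 V(u/K,v/K)\,\1_{\{i=j\}}$, proving the second formula.

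There is no real obstacle here: everything follows from the deterministic scaling relation between $(B,\tau_K)$ and $(\tilde B,\tilde\tau)$ and a single application of Lemma~\ref{L2.1.0}. The only point to be slightly careful about is that the rescaling $B\mapsto \tilde B$ simultaneously rescales the starting point, the exit time, and the exit location in the consistent way above, so that both assertions of the corollary drop out together.
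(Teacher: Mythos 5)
Your proof is correct and is exactly the paper's approach: the paper's proof is the one-liner ``This follows from Brownian scaling,'' and you have simply made explicit the rescaling $\tilde B_t = K^{-1}B_{K^2t}$, the resulting identities $\tau_K = K^2\tilde\tau$, $B_{i,\tau_K}=K\tilde B_{i,\tilde\tau}$, and the application of Lemma~\ref{L2.1.0}.
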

\begin{pf}
This follows from Brownian scaling.
\end{pf}
\begin{lemma}
\label{L2.1.2b}
For all $u,v\in[0,1]$, we have
%
%
\begin{equation}
\label{EL2.1.2b}
V(u,v)\geq2u(1-u) v(1-v).
\end{equation}
\end{lemma}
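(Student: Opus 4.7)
The plan is to use the PDE characterization of $V$ established just above. Set
\[
h(u,v) := 2u(1-u)v(1-v) \mbu g(u,v) := V(u,v) - h(u,v).
\]
Both $V$ and $h$ vanish on $\partial(0,1)^2$, so $g\equiv 0$ on the boundary. Hence, by the minimum principle for superharmonic functions (equivalently, the maximum principle applied to the subharmonic function $-g$), it is enough to check that $\frac12\Delta g \leq 0$ on $(0,1)^2$.

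A direct computation gives $\frac12\Delta h(u,v) = -2u(1-u) - 2v(1-v)$, and we already know from \eqref{EDirProb} that $\frac12\Delta V = -1$ on $(0,1)^2$. Therefore
\[
\tfrac12\Delta g(u,v) \;=\; -1 + 2u(1-u) + 2v(1-v).
\]
Since $x(1-x)\leq 1/4$ on $[0,1]$, the right-hand side is $\leq -1 + 1/2 + 1/2 = 0$, and so $g$ is indeed superharmonic on $(0,1)^2$. Combined with $g=0$ on $\partial(0,1)^2$, the minimum principle yields $g\geq 0$ on $[0,1]^2$, which is the desired inequality.

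There is essentially no obstacle; the only thing to be careful about is choosing the right comparison function. The factor $2$ in the bound $2u(1-u)v(1-v)$ is what makes the argument tight: the estimate $2u(1-u)+2v(1-v)\leq 1$ is sharp at $(u,v)=(1/2,1/2)$, so the bound cannot be improved by this method without a more refined competitor. Alternatively, one could present this probabilistically via Dynkin's formula applied to $h$: $V(u,v)-h(u,v) = \E_{(u,v)}\!\int_0^\tau\bigl(1-2B_{1,s}(1-B_{1,s})-2B_{2,s}(1-B_{2,s})\bigr)ds$, and the integrand is a.s.\ nonnegative on $\{s<\tau\}$ since $B_s\in[0,1]^2$.
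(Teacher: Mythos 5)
Your proof is correct and is essentially the same argument as the paper's: both use the comparison function $2u(1-u)v(1-v)$ together with the maximum principle for the Dirichlet problem \eqref{EDirProb}, differing only in whether one phrases it as $f$ being a sub-solution (the paper) or $V-f$ being superharmonic and nonnegative on the boundary (you).
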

\begin{pf}
Let $f(u,v)=2u(1-u) v(1-v)$. Then
\[
\tfrac12\Delta f(u,v)=-2[u(1-u)+v(1-v)]\geq-1.\vadjust{\goodbreak}
\]
Hence, by the maximum principle, $f$ is a sub-solution for the Poisson
problem (\ref{EDirProb}) which shows $f\leq V$.
\end{pf}
\begin{proposition}
\label{P2.1.3}
Let $K>0$ and let $B$ be
Brownian motion in $[0,K]^2$. Then for all $u,v\in[0,K]$, we have
%
%
\begin{eqnarray}
\label{EBM.04}
 \qquad \Cov_{(u,v)}[B_{i,\tau_K}, B_{j,\tau_K}]&=&V(u/K,v/K)K^2 \1_{\{i=j\}
}\nonumber
\\[-4pt]
\\[-12pt]
 \quad &\leq&
8 uv \bigl[1+\log(K)+\bigl(\log(1/u)\wedge\log(1/v)\bigr) \bigr] \1_{\{i=j\}}
\nonumber
\end{eqnarray}
and for all $u,v\in[0,K/2]$, we have
%
%
\begin{equation}
\label{EBM.04bb}
\Cov_{(u,v)}[B_{i,\tau_K}, B_{j,\tau_K}] \geq
\tfrac12 uv \1_{\{i=j\}}.
\end{equation}
\end{proposition}
\begin{pf}
The first equality is due to Brownian scaling. Hence, it is enough to
consider the case $K=1$.
Note that (\ref{EBM.04bb}) is an immediate consequence of Lemma~\ref
{L2.1.2b}. Hence, we concentrate on showing (\ref{EBM.04}).

We have to show that
[with $V$ from (\ref{EBM.02})]
%
%
\begin{equation}
\label{EBM.04b}
V(u,v) \leq 8 uv [1+\log(1/u)].
\end{equation}

By symmetry in $u$ and $v$, this implies (\ref{EBM.04}).

As $V(u,v)$ is bounded by the expected time one-dimensional Brownian
motion started at $v$ needs to hit $\{0,1\}$, we have $V(u,v)\leq
v(1-v)\leq v$
for all $u,v\in[0,1]$. Hence, for $u>1/3$, (\ref{EBM.04b}) holds with
the factor 8 replaced by $3/(1+\log(3))\leq2$.

We may and will now assume that $u\leq1/3$. Let $M\in\N$
be such that
$\frac{1}{2M+3}< u\leq\frac{1}{2M+1}$.
We will show that
%
%
\begin{equation}
\label{EBM.05}
V (u, v )
\leq 8 uv [1+\log(M) ]\qquad\mbox{for }v\in[0,1],
M\in\N.
\end{equation}

We estimate $|\sin((2m+1)\pi u)|\leq\pi(2m+1) u$ for $m\leq M-1$ and
$|\sin((2m+1)\pi u)|\leq1$ for $m\geq M$, as well as $|\sin
((2n+1)\pi v)|\leq\pi(2n+1) v$. Hence, we obtain
\begin{eqnarray*}
V(u,v)
&=& \sum_{n=0}^\infty\sum_{m=0}^{M-1}
c_{m,n}\sin\bigl((2m+1)\pi u \bigr)\sin\bigl((2n+1)\pi
v \bigr)\\
&&{} +
\sum_{n=0}^\infty
\sum_{m=M}^\infty
c_{m,n}\sin\bigl((2m+1)\pi u \bigr)\sin\bigl((2n+1)\pi
v \bigr)\\
&\leq& \frac{32uv}{\pi^2}\sum_{m=0}^{M-1}\sum_{n=0}^\infty
\frac{1}{(2m+1)^2+(2n+1)^2}\\
&&{} + \frac{32v}{\pi^3}\sum_{m=M}^\infty\frac{1}{2m+1}\sum
_{n=0}^\infty\frac{1}{(2m+1)^2+(2n+1)^2}\\
&=:& I_M(u,v) + J_M(u,v).
\end{eqnarray*}
The two summands will be estimated separately. First, note that
\begin{eqnarray*}
I_M(u,v) &\leq& \frac{32uv}{\pi^2} \sum_{m=0}^{M-1}\int
_0^\infty
\frac{1}{(2m+1)^2+t^2}\,dt\\
&=& \frac{16 uv}{\pi} \sum_{m=0}^{M-1}\frac1{2m+1} \leq\frac
{16 uv}{\pi} [1+\log(M) ].
\end{eqnarray*}
Similarly, we get (note that $5M\geq2M+3\geq1/u$ by the assumption on $M$)
\[
J_M(u,v) \leq\frac{16 v}{\pi^2} \sum_{m=M}^{\infty}\frac
1{(2m+1)^2} \leq\frac{4 v}{\pi^2} \frac{1}{M}
\leq\frac{20 v}{\pi^2} \frac{1}{2M+3} \leq\frac{20}{\pi
^2} uv.
\]

Summing up and noting that $16/\pi+20/\pi^2\leq8$, we obtain (\ref{EBM.05}).
\end{pf}

\subsection{Construction of the truncated process}
\label{S2.2}
The aim of this section is to construct a process $X^K$ that approaches
$X$ as $K\to\infty$ and which has finite second moments. The idea is
to suppress the large jumps of $X$ so that the remaining jumps have
second moments. It turns out that if we proceed a bit more subtly, then
we can obtain even that the coordinate processes of $X^K$ are
orthogonal square integrable martingales and that we can control the
conditional quadratic variation process. The rough idea is as follows.
The jumps of $X$ can be interpreted as being driven by the positional
changes of planar Brownian motion at its exit points from $[0,\infty
)^2$. For the process $X^K$, we stop this planar Brownian motion when
it exits $[0,K]^2$.

We could proceed in two ways to construct $X^K$:
\begin{longlist}[(2)]
\item[(1)]
We could imitate the SPDE construction of $X$ (see \cite{KM2}) by
replacing the intensity measure on $E$ of the Poisson point process by
a suitable intensity measure on $[0,K]^2\setminus(0,K)^2$.
\item[(2)]
We could imitate the Trotter type construction of $X$ (see \cite{KO})
by replacing the harmonic measure $Q$ on $[0,\infty)^2$ by the
harmonic measure $Q^K$ on $[0,K]^2$.
\end{longlist}
Here, we follow the latter approach. In \cite{KO}, the following was
done in order to construct $X$: For fixed $\ve>0$, consider the
stochastic process $X^\ve$ with values
in $([0,\infty)^2)^S$ with the following dynamics:
\begin{longlist}[(ii)]
\item[(i)]
Within each time interval $[n\ve,(n+1)\ve)$, $n\in\N_0$, $X^\ve$
is the solution of
\[
dX^\ve_{i,t}(k)=(\CA X^\ve_{i,t})(k)\,dt\qquad\mbox{for }t\in\bigl[n\ve
,(n+1)\ve
\bigr),\  k\in S.
\]
Clearly, the explicit solution is
\[
X^\ve_{i,t}(k)=(\SG_{t-n\ve} X^\ve_{i,n\ve})(k)\qquad\mbox{for
}t\in\bigl[n\ve
,(n+1)\ve\bigr).
\]
\item[(ii)]
At time $n\ve$, $X^\ve$ has a discontinuity. Independently, each
coordinate $X^\ve_{n\ve-}(k)=\SG_{\ve}X^\ve_{(n-1)\ve}(k)$ is
replaced by a random element of $E$ drawn according to the distribution
$Q_{X^\ve_{n\ve-}(k)}$.
\end{longlist}
In order for the solution in Step (i) to be well defined, we have to
impose some growth condition on the initial states (see \cite{KO},
Theorem~1). Since here we are interested in finite initial states, this
growth condition is automatically fulfilled.

In \cite{KO}, it was shown that $X^\ve$ converges as $\ve\to0$ to
$X$ in the Skorohod space of paths $[0,\infty)\to([0,\infty)^2)^S$. Define
\[
\tau_K:=\inf\{t>0\dvtx  \langle X_{1,t}+X_{2,t},1\rangle\geq
K/2 \}.
\]
Clearly, $\tau_K$ is a stopping time and by Doob's inequality,
we get
\[
\P[\tau_K<\infty] \leq2 \frac{\langle
X_{1,0}+X_{2,0},1\rangle}{K}.
\]
Now, assume that $\langle x_{1}+x_{2},1\rangle<K$. We construct
$X^{K,\ve}$ with initial condition~$x$ just as $X$ but with two differences:
\begin{longlist}[(2)]
\item[(1)]
In Step (ii) above, we replace $E$ by $[0,K]^2\setminus(0,K)^2$ and
$Q$ by $Q^K$.
\item[(2)]
If $\langle X^{K,\ve}_{1,n\ve}+X^{K,\ve}_{2,n\ve},1\rangle>K/2$,
then Step (ii) above is omitted.
\end{longlist}
Note that Step (i) preserves the total mass, hence once the total mass
exceeds $K/2$, the process $X^{K,\ve}$ is simply the discrete space
heat flow with kernel $\CA$.
Denote by
\[
\tau^{K,\ve} := \inf\{t>0\dvtx  \langle X^{K,\ve}_{1,t}+X^{K,\ve
}_{2,t},1\rangle\geq
K/2 \}
\]
the time when this first happens. Note that due to the strong Markov
property of planar Brownian motion, we have
\[
Q_{x}
=\int_{[0,K]^2\setminus(0,K)^2}Q^K_x (dy ) Q_y.
\]
Hence, $X^\ve$ and $X^{K,\ve}$ can be coupled to coincide almost
surely until $\tau^{K,\ve}$. Since $X^\ve$ converges, this implies
that also $(X^{K,\ve}_{\tau^{K,\ve}\wedge t})_{t\geq0}$ converges
in the Skorohod space as $\ve\to0$. Since the $\CA$ heat flow
clearly exists, in fact, $X^{K,\ve}$ converges as $\ve\to0$ to some
process $X^K$. Clearly,
\[
M^{K,\ve}_{i,t}(k) := X^{K,\ve}_{i,t}(k)-\int_0^t (\CA
X^{K,\ve}_{i,s} )(k)\,ds,\qquad i=1,2,\  k\in S,
\]
are orthogonal square integrable martingales with conditional quadratic
variation process [see (\ref{EL2.1.0.1}) and (\ref{EBM.04})]
\[
\langle M^{K,\ve}_{i}(k) \rangle_{t}=\sum_{n\dvtx  n\ve\leq
{t\wedge\tau^{K,\ve}}}K^2V (X^{K,\ve}_{1,n\ve-}/K, X^{K,\ve
}_{2,n\ve-}/K ).
\]
\subsubsection*{Upper bound for the conditional quadratic variation}
Let $I$ be the identity matrix. Note\vadjust{\goodbreak} that for each $n$ with $(n-1)\ve
<\tau^{K,\ve}$, we either have $X^{K,\ve}_{1,(n-1)\ve}(k)=0$
[which implies $X^{K,\ve}_{1,n\ve-}(k)= (\SG_\ve-I) X^{K,\ve
}_{1,(n-1)\ve}(k)$] or\break $X^{K,\ve}_{2,(n-1)\ve}(k)=0$ [which implies
$X^{K,\ve}_{2,n\ve-}(k)= (\SG_\ve-I) X^{K,\ve}_{2,(n-1)\ve}(k)$].
Also note that by Proposition~\ref{P2.1.3}, we have $K^2V(u/K,v/K)\leq
u h_K(v)$ and $K^2V(u/K,\break v/K)\leq v h_K(u)$, where
%
%
\begin{equation}
\label{E2.4.4}
h_K(u):=8 u \bigl(1+\log(K/u) \bigr).
\end{equation}
Hence, we get
\begin{eqnarray*}
\langle M^{K,\ve}_{i}(k) \rangle_{t}
&\leq& \ve\sum_{n\dvtx  n\ve\leq{t\wedge\tau^{K,\ve}}}
\ve^{-1}(\SG_\ve-I)
X^{K,\ve}_{1,(n-1)\ve}(k)h_K (X^{K,\ve}_{2,n\ve-}(k) )\\[-2pt]
&&\hphantom{\ve\sum_{n\dvtx  n\ve\leq{t\wedge\tau^{K,\ve}}}}{}
+ \ve^{-1}(\SG_\ve-I) X^{K,\ve}_{2,(n-1)\ve}(k)h_K
(X^{K,\ve}_{1,n\ve-}(k) ).
\end{eqnarray*}
Since bounded $L^2$-martingales converge to bounded $L^2$-martingales,
and since $\ve^{-1}(\SG_\ve-I)(k,l)\limve\TM(k,l)$ for $k\neq l$,
we get that
%
%
\begin{equation}
\label{EdefMK}
M^{K}_{i,t}(k) := X^{K}_{i,t}(k)-\int_0^t (\CA X^{K}_{i,s}
)(k)\,ds, \qquad i=1,2,\  k\in S,
\end{equation}
are orthogonal square integrable martingales with conditional quadratic
variation processes
%
%
\begin{equation}
\label{EoM}
  \langle M^{K}_{i}(k) \rangle_{t}\leq\int_0^{t}\bigl (\TM
X^K_{1,s}(k)h_K (X^K_{2,s}(k) )+\TM X^K_{2,s}(k)h_K
(X^K_{1,s}(k) ) \bigr)\,ds.\hspace*{-35pt}
\end{equation}

\subsubsection*{Lower bound for the conditional quadratic variation}
Define
%
%
\begin{equation}
\label{equt:17_04_1}
\tau^K:=\inf\{t>0\dvtx  \langle X^K_{1,t}+X^K_{2,t},1\rangle\geq
K/2 \}.
\end{equation}
By Doob's inequality,
\[
\P[\tau^K<\infty] \leq2\frac{\langle
X_{1,0}+X_{2,0},1\rangle}{K}.
\]
Furthermore, $X^K$ coincides with $X$ (in distribution) until time
$\tau^K$.
By Proposition~\ref{P2.1.3}, we have
\begin{eqnarray*}
\langle M^{K,\ve}_{i}(k) \rangle_{t}
&\geq&\frac12\sum_{n\dvtx  n\ve\leq{t\wedge\tau^{K,\ve}}}
X^{K,\ve}_{1,n\ve-}(k)X^{K,\ve}_{2,n\ve-}(k)\\
&=& \frac12\sum_{n\dvtx  n\ve\leq{t\wedge\tau^{K,\ve}}}
\SG_\ve X^{K,\ve}_{1,(n-1)\ve}(k)\SG_\ve X^{K,\ve}_{2,(n-1)\ve}(k).
\end{eqnarray*}
Recall that
$\SG_\ve(k,l)=e^{-\ve}\1_{\{k=l\}}+e^{-\ve}\ve\TM(k,l)+\cdots$.
Since $X^{K,\ve}_{1,(n-1)\ve}(k)\times\break X^{K,\ve}_{2,(n-1)\ve}(k)=0$ for
all $n\leq\tau^{K,\ve}$, we have
\begin{eqnarray*}
\langle M^{K,\ve}_{i}(k) \rangle_{t}
&\geq&\frac12\ve
e^{-\ve}
\sum_{n\dvtx  n\ve\leq{t\wedge\tau^{K,\ve}}}
\bigl[\TM X^{K,\ve}_{1,(n-1)\ve}(k)X^{K,\ve}_{2,(n-1)\ve}(k)\\
&&\hphantom{\frac12\ve
e^{-\ve}
\sum_{n\dvtx  n\ve\leq{t\wedge\tau^{K,\ve}}}
\bigl[}
{}
+X^{K,\ve}_{1,(n-1)\ve}(k) \TM X^{K,\ve}_{2,(n-1)\ve}(k) \bigr],
\end{eqnarray*}
where we also used $\TM X^{K,\ve}_{i,(n-1)\ve}(k)X^{K,\ve
}_{3-i,(n-1)\ve}(k)=\CA X^{K,\ve}_{i,(n-1)\ve}(k)X^{K,\ve
}_{3-i,(n-1)\ve}(k)$ for $(n-1)\ve<\tau^{K,\ve}$.

Since $(X^{K,\ve}_i)_{\ve>0}$ is a convergent sequence of bounded
square integrable martingales, also the conditional quadratic variation
processes converge and we infer for $t\geq s\geq0$,
%
%
\begin{eqnarray}
\label{Elowbousq}
&&\langle M^{K}_{i}(k) \rangle_{t}- \langle
M^{K}_{i}(k) \rangle_{s}\nonumber
\\[-8pt]
\\[-8pt]
&& \qquad \geq\frac12\int_{s\wedge\tau^K}^{t\wedge\tau^K} \bigl(\TM
X^K_{1,r}(k)X^K_{2,r}(k)+\TM X^K_{2,r}(k)X^K_{1,r}(k) \bigr)\,dr.
\nonumber
\end{eqnarray}

\subsection{Truncated process and martingales}
\label{S2.3}
In order not to interrupt the flow of the argument later, we start
here with a lemma.
\begin{lemma}
\label{L2.4.1}
Let $Y$ and $Z$ be nonpositively correlated nonnegative random
variables and assume that $h\dvtx [0,\infty)\to[0,\infty)$ is concave
and monotone increasing. Then
$
\E[Yh(Z)]\leq\E[Y] h(\E[Z])
$.
\end{lemma}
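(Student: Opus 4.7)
The plan is to exploit concavity of $h$ via a supporting linear function at the point $\E[Z]$, and then use monotonicity of $h$ together with non-negativity of $Y$ to preserve the inequality when we multiply through and take expectations.

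First I would dispose of the degenerate case $\E[Z]=0$: since $Z\geq 0$ this forces $Z=0$ a.s., and the claim reduces to the identity $\E[Y]h(0)=\E[Y]h(0)$. From now on assume $\E[Z]>0$. Since $h$ is concave on $[0,\infty)$, it admits a supergradient $\lambda$ at $\E[Z]>0$, meaning
\[
h(z)\;\leq\;h(\E[Z])+\lambda\bigl(z-\E[Z]\bigr)\qquad\text{for all }z\geq 0.
\]
Because $h$ is non-decreasing, any such supergradient satisfies $\lambda\geq 0$ (otherwise the linear bound would eventually drop below $h(\E[Z])$ as $z\to\infty$, contradicting $h(z)\geq h(\E[Z])$).

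Next I would plug $z=Z$ and multiply by $Y\geq 0$, which preserves the inequality, to obtain
\[
Y\,h(Z)\;\leq\;Y\,h(\E[Z])+\lambda\,Y\bigl(Z-\E[Z]\bigr)\qquad\text{a.s.}
\]
Taking expectations and using $\E[Y(Z-\E[Z])]=\Cov(Y,Z)$ yields
\[
\E[Yh(Z)]\;\leq\;\E[Y]\,h(\E[Z])+\lambda\,\Cov(Y,Z).
\]
Finally, since $\lambda\geq 0$ and $\Cov(Y,Z)\leq 0$ by assumption, the last term is non-positive, giving the desired bound $\E[Yh(Z)]\leq\E[Y]h(\E[Z])$.

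There is no real obstacle here; the only point requiring mild care is the existence of a non-negative supergradient (handled by concavity plus monotonicity) and the trivial case $\E[Z]=0$. Implicitly we are also assuming the relevant expectations are finite, but $h(Z)$ is dominated by the affine bound above so integrability of $Yh(Z)$ follows from integrability of $Y$ and $YZ$, which are built into the hypothesis that $Y$ and $Z$ have a well-defined covariance.
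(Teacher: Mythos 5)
Your proof is correct and follows essentially the same route as the paper: both handle the trivial case $\E[Z]=0$, then pick a supergradient (the paper calls it $b$, you call it $\lambda$) of the concave function $h$ at $\E[Z]>0$, observe that monotonicity of $h$ forces it to be non-negative, multiply the supporting-line bound by $Y\geq 0$, and take expectations to reduce the claim to $\Cov(Y,Z)\leq 0$. Your write-up merely spells out the covariance step and the integrability remark a bit more explicitly than the paper does.
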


\begin{pf}
If $\E[Z]=0$, then we even have equality. Now, assume that $\E[Z]>0$.
By concavity of $h$, there exists a real number $ b\in\R$ such that
for all $ z\geq0$,
\[
h(z)\leq h(\E[Z])+(z-\E[Z]) b.
\]
Since $ h $ is nondecreasing, we have $ b\geq0 $ and thus
\[
\E[Y h(Z)]
\leq \E\bigl[Y \bigl(h(\E[Z])+(Z-\E[Z]) b \bigr) \bigr]
\leq \E[Y] h(\E[Z]).
\]
\upqed
\end{pf}
Let $ l_1,l_2\in S$ and let $X^K$ be the truncated process with
initial state $X^{K}_0=(\1_{\{l_1\}},\1_{\{l_2\}})$.

Writing (\ref{EdefMK}) in the form
\[
X^K_{i,t}(k)=X^K_{i,0}(k)+\sum_{l\in S}\int_0^t \SG_{t-s}(k,l)\,
dM_{i,s}^K(l),
\]
and recalling that the $M^K_{i}(l)$ are orthogonal martingales, we get
that the random variables $X^{K}_{1,s}(k)$ and $X^{K}_{2,s}(l)$ are
uncorrelated for $k,l\in S$. Hence, $\TM X^K_{i,s}(l)$ and
$X^K_{3-i,s}(l)$ are uncorrelated. Note that $x\mapsto h_K(x)$
[defined\break
in (\ref{E2.4.4})] is
concave and monotone increasing for $x\leq K$. Hence, by
Lemma~\ref{L2.4.1}, we get that
%
%
\begin{equation}
\label{E2.4.7}
\E[ \langle M^K_i(l) \rangle_t ]
\leq \sum_{j=1}^2\int_0^t
h_K (\E[X^K_{j,s}(l) ] )
\E[\TM X^K_{3-j,s}(l) ]\,ds.
\end{equation}
Denote by
%
%
\begin{equation}
\label{Edeftmp}
M^K_i=\sum_{k\in S}M^K_i(k)= \langle X^K_i,1 \rangle,\qquad i=1,2,\vadjust{\goodbreak}
\end{equation}
the total mass process of $X^K_i$.
Then (\ref{E2.4.7}) implies
%
%
\begin{eqnarray}
\label{E2.4.8}
\Var[M^K_{i,t} ]
&=&\E[ \langle M^K_i \rangle_t ]\nonumber\\
&=&\sum_{l\in S}\E[ \langle M^K_i(l) \rangle_t ]\\
&\leq&\sum_{j=1}^2\int_0^\infty\sum_{l\in
S}h_K ((\SG_sX_{j,0})(l) ) (\TM\SG_sX_{3-j,0})(l)\,ds.\nonumber
\end{eqnarray}
Using again the concavity of $h_K$ and Jensen's inequality [for the
probability measure $l\mapsto(\TM\SG_s)(l,k)$], we get
%
%
\begin{equation}
\label{E2.4.9}
\Var[M^K_{i,t} ]
\leq\sum_{j=1}^2\int_0^\infty\sum_{k\in S} X_{3-j,0}(k) h_K
\biggl(\sum_{l\in
S}(\TM\SG_s)(l,k) (\SG_sX_{j,0})(l) \biggr)\,ds.\hspace*{-35pt}
\end{equation}

Now, recall that the initial states are $X_{i,0}=\1_{\{l_i\}}$ and that
$\SISG_s= (\SG_s(\TM+\TMtrans)\SG_s^T )$. Hence, we obtain
%
%
\begin{equation}
\label{E2.4.10}\quad
\Var[M^K_{i,t} ]
\leq\sum_{j=1}^2\int_0^\infty h_K (\SISG_s(l_1,l_2) )\,ds
\leq8\log(K) \SISGlog(l_1,l_2).
\end{equation}
Hence, $M^{K}_1$ and $M^{K}_2$ are (orthogonal) $L^2$-bounded
martingales and thus converge almost surely and in $L^2$ to some
random variables $M^K_{1,\infty}$ and $M^K_{2,\infty}$ with
$\E[M^K_i]=1$ and $\Var[M^K_{i,\infty}]\leq8\log(K) \SISGlog(l_1,l_2)$.

\subsection{\texorpdfstring{Proof of Theorem~\protect\ref{T1}}{Proof of Theorem 1}}
\label{S2.4}
Clearly, the product $M^{K}_1\cdot M^{K}_2$ is a uniformly
integrable martingale, hence
%
%
\begin{equation}
\label{E2.5.1}
\E[M^K_{1,\infty} M^K_{2,\infty} ]=\E
[M^{K}_{1,0}M^{K}_{2,0} ]=1.
\end{equation}
Write
\[
\hat M^K_i:=M^K_{i,\infty} \1_{\{\tau_K<\infty\}}.
\]
Since we have $X^K=X$ on $\tau^K=\infty$, in order to show
Theorem~\ref{T1}, it
is enough to show that $\E[M^K_{1,\infty}
M^K_{2,\infty} \1_{\{\tau_K=\infty\}} ]>0$. To this end, we compute
\begin{eqnarray*}
&&\E\bigl[M^K_{1,\infty} M^K_{2,\infty} \1_{\{\tau_K=\infty\}}
\bigr]\\
&& \qquad=
\E[M^K_{1,\infty} M^K_{2,\infty} ]
-\E[M^K_{1,\infty} \hat M^K_2 ] -\E[\hat M^{K}_1
M^K_{2,\infty} ]
+\E[\hat M^{K}_1 \hat M^K_2 ].
\end{eqnarray*}
By (\ref{E2.5.1}), it is thus enough to show that
%
%
\begin{equation}
\label{E2.5.2}
\E[M^K_{1,\infty} \hat M^K_2 ]<\tfrac12
\quad\mbox{and}\quad
\E[\hat M^K_1 M^K_{2,\infty} ]<\tfrac12.
\end{equation}
Although $M^K_{1,\infty}$ and $M^K_{2,\infty}$ are uncorrelated,
this is not true for $\hat M^K_1$ and $M^K_{2,\infty}$ (at least
we cannot show this). Hence,\vadjust{\goodbreak} we have to use a slightly more
subtle
argument. We employ the Cauchy--Schwarz inequality to estimate
%
%
\begin{eqnarray}
\label{E2.5.3}
\E[M^K_{1,\infty} \hat M^K_2 ]^2
&\leq&\E[(M^K_{1,\infty})^2 ] \E[(\hat M^K_1)^2
]\nonumber
\\
&=& (1+\Var[M^K_{1,\infty} ] ) \E[(\hat
M^K_1)^2 ]\\
&\leq& \bigl(1+8\log(K) \SISGlog(l_1,l_2) \bigr)
\tfrac{9}{4} K^2.\nonumber
\end{eqnarray}
Recall that $K>2$ is fixed. Now, choose $l_1$ and $l_2$ such that
$\SISGlog(l_1,l_2)$ gets so small that the right-hand side
in
(\ref{E2.5.3}) is bounded by $\frac14$. Similarly, we get $\E[\hat M^K_1
M^K_{2,\infty}]^2<\frac14$.

This shows (\ref{E2.5.2}) and thus completes the proof of
Theorem~\ref{T1}.

\section{\texorpdfstring{Noncoexistence of types, proof of Theorem~\protect\ref{T2}}{Noncoexistence of types, proof of Theorem 2}}
\label{S3}

The strategy of proof is described in the following two steps.

\textit{Step 1.} Replace the process $X$ by the approximate
process $X^K$ constructed in Section~\ref{S2.2}. If $X$ would have
coexistence of types, then so would~$X^K$ (for some large $K$).

\textit{Step 2.}
Since $M^K_{i,t}=\langle X^K_{i,t},1\rangle$, $t\geq0$, is a
convergent martingale with bound\-ed jump size, also the conditional
quadratic variation process $\VV$ converges. We derive a lower bound
for $\VV$ and show that due to the recurrence of $\CA$, this lower
bound would diverge with positive probability if~$X^K$ had coexistence
of types. Hence, there can be no coexistence of types for~$X^K$ and thus neither
for $X$.

\subsection{Step 1: The approximate process}
\label{S3.1}
Assume that for $X$, coexistence of types is possible. We will lead
this assumption to a contradiction.

Recall that $M_{i,t}:=\langle X_{i,t},1\rangle$, $t\geq0$, $i=1,2$,
are the total mass processes.
Coexistence of types means that there exists a deterministic initial
state $X_0$ such that $M_{i,t}<\infty$, $i=1,2$, and such that
\[
\lim_{t\to\infty}M_{1,t} M_{2,t}>0
\qquad\mbox{with positive probability.}
\]
(Recall from the discussion prior to Theorem~\ref{T1} that the total
mass processes are nonnegative martingales and are hence convergent.)
We use Lemma~\ref{Lnew} below (with $Z_t=M_{1,t}M_{2,t}$) to infer
that there exists a $ \delta>0$, such that
%
%
\begin{equation}
\label{Estar}
\P[M_{1,t} M_{2,t}\geq\delta\mbox{ for all }t\geq0 ] \geq
6\delta.
\end{equation}

\begin{lemma}
\label{Lnew}
Let $(Z_t)_{t\geq0}$ be a nonnegative right continuous
supermartingale. Then
\[
\inf\{Z_t\dvtx  t\geq0\}>0 \qquad\mbox{a.s. on the event } \Bigl\{\lim_{t\to
\infty}Z_t>0 \Bigr\}.
\]
\end{lemma}
\begin{pf}
Let $(\CF_t)_{t\geq0}$ denote the natural filtration of $Z$. By the
martingale convergence theorem, $(Z_t)$ converges almost surely to some
limit $Z_\infty$.
Fix numbers $T,S>0$ with $T>S+1$.
For $\ve>0$, define the bounded\vadjust{\goodbreak} stopping time $\tau_{\ve}:=\inf\{
t\geq0\dvtx  Z_t\leq\ve\}\wedge(S+1)$.
By the optional sampling theorem for right continuous supermartingales
(see, e.g., \cite{EthierKurtz1986}, Theorem II.2.13), we get
\[
Z_{\tau_\ve}\geq\E[Z_T | \CF_{\tau_\ve}]\qquad\mbox{a.s.}
\]
By Markov's inequality, we infer for $\delta>0$, that $\P[Z_T\geq
\delta | \CF_{\tau_{\ve}}]\leq Z_{\tau_{\ve}}/\delta$
and, in particular,
\[
\P[Z_T\geq\delta\mbox{ and }\tau_{\ve}\leq S ]\leq\frac
{\ve}{\delta}.
\]
Letting $\ve\downarrow0$ and then $\delta\downarrow0$ yields
\[
\P\bigl[Z_{T}>0\mbox{ and }\inf\{Z_{t}\dvtx  t\in[0,S]\}=0 \bigr]=0.
\]
Letting $T\to\infty$ gives
\[
\P\bigl[Z_{\infty}>0\mbox{ and }\inf\{Z_{t}\dvtx  t\in[0,S]\}=0 \bigr]=0.
\]
Since $(Z_t)$ converges, this implies
\[
\P[Z_{\infty}>0\mbox{ and }\inf\{Z_{t}\dvtx  t\geq0\}=0 ]=0.
\]
\upqed
\end{pf}
Recall $\delta$ defined in (\ref{Estar}) and define
\[
K:=\frac{2}{\delta} (M_{1,0}+M_{2,0} ).
\]
Let $X^K$ denote the truncated process defined in Section~\ref{S2.2}
with \mbox{$X^K_0=X_0$}. Recall that $(M^K_{i,t})_{t\geq0}$ is the total mass
process of $X^K_i$ and that it is a~martingale. Hence, we have
%
%
\begin{equation}
\label{EestF}
\P[F]\geq1-\delta,
\end{equation}
where
%
%
\begin{equation}
\label{EdefF}
F:= \{M^K_{1,t}+M^K_{2,t}\leq K/2\mbox{ for all }t\geq0 \}.
\end{equation}
Also
\[
\P[M_{1,t}+M_{2,t}\leq K/2\mbox{ for all }t\geq0 ] \geq
1-\delta.
\]
We can couple $X^K$ and $X$ such that both processes coincide on $F$. Hence,
for
\[
B:= \{M^K_{1,t}M^K_{2,t}\geq\delta\mbox{ for all }t\geq0 \},
\]
we have
%
%
\begin{equation}
\label{EE3.1}\qquad
\P[B] \geq\P[F\cap B] \geq\P[M_{1,t} M_{2,t}\geq
\delta\mbox{ for all }t\geq0 ]-\P[F^c] \geq5\delta.
\end{equation}

\subsection{Step 2: The lower bound for the conditional quadratic variation}
\label{S3.2}
Denote by $(\VV_t)_{t\geq0}$ the conditional quadratic variation
process of $(M^K_{1,t})_{t\geq0}$. Since $M^K_1$ is a martingale whose
jumps are bounded (by $K$), convergence of~$M^K_1$ implies almost sure
convergence of $\VV_t\to\VV_\infty<\infty$ as $t\to\infty$.
Hence, for any $\ve>0$ and $\delta>0$, there exists a $T_0$ such that
%
%
\begin{equation}
\label{EE3.3}
\P[\VV_t-\VV_{T_0}>\ve] \leq\delta\qquad\mbox{for
all }t\geq T_0.\vadjust{\goodbreak}
\end{equation}
The aim of this section is to show that (\ref{EE3.1}) leads to a
contradiction to (\ref{EE3.3}) which shows that the assumption that for
$X$ coexistence of types would be possible was wrong.

Fix $\delta>0$. We choose an appropriate $\ve>0$ for a contradiction
via the following procedure. Recall (\ref{E1.30}) and define
%
%
\begin{equation}
\label{Edefaast}
\TMstar(k):= 1- \TM(k,k)\quad\mbox{and}\quad
\TMstar:=\inf_{k\in S} \TMstar(k)>0.
\end{equation}
Recall $c$ from (\ref{E1.10.1}) and define
%
%
\begin{equation}
\label{equt:3}
c':= \frac{\TMstar c}{16} .
\end{equation}
Let
%
%
\begin{equation}
\label{equt:13}
R:=2+\frac{2K^2}{\delta^3 c'}.
\end{equation}
Furthermore, define
%
%
\begin{equation}
\label{equt:16}
\ve:=\frac{\delta^2c'}{R+1}.
\end{equation}
Note that
%
%
\begin{equation}
\label{equt:16b}
\ve-c'\delta^2= -R\ve\quad\mbox{and}\quad\frac{K^2}{R^2\ve}\leq
\delta.
\end{equation}

Recall from (\ref{EdefMK}) that $M^K_{1}(k)$, $k\in S$, are orthogonal
martingales and that $M^K_1=\sum_{k\in S}M^K_1(k)$. Hence, by (\ref
{Elowbousq}), for $t\geq T_0$, we have
\begin{eqnarray*}
\VV_t-\VV_{T_0} &=& \sum_{k\in S} \bigl( \langle M^K_{1}(k)
\rangle_t - \langle M^K_{1}(k) \rangle_{T_0} \bigr)\\
&\geq& \frac12\sum_{k\in S}\int_{T_0\wedge\tau^K}^{t\wedge
\tau^K}
\bigl(\TM X^K_{1,s}(k)X^K_{2,s}(k)+\TM X^K_{2,s}(k)X^K_{1,s}(k) \bigr)\,ds,
\end{eqnarray*}
where $\tau^K$ is defined in (\ref{equt:17_04_1}).
Define
%
%
\begin{equation}
Z_t=\frac12\sum_{k\in S} \int_{T_0}^{t}X^K_{1,s}(k)\CA X^K_{2,s}(k)+
X^K_{2,s}(k)\CA X^K_{1,s}(k)\,ds\qquad\mbox{for }t\geq T_0 .\hspace*{-35pt}
\end{equation}
Note that $\tau^K=\infty$ on $F$.
Hence, for all $ t\geq T_0$,
%
%
\begin{equation}
\label{EcompVZ}
\VV_t-\VV_{T_0}\geq Z_t\qquad\mbox{on } F.
\end{equation}

\begin{lemma}
\label{lem:2}
For any $k_1,k_2\in S$, $k_1\neq k_2$, and $t\geq T_0$, let
$N_t(k_1,k_2)$ be given by
\begin{eqnarray*}
&&N_t(k_1,k_2)\\
&& \qquad  = \mathop{\mathop{\sum}_{l,l'\in S}}_{ l\neq l'}\int
_{T_0}^t \SG
_{t-s}(k_1,l')\SG_{t-s}(k_2,l) \bigl(X^K_{1,s}(l')\,dM^K_{2,s}(l)
+X^K_{2,s}(l)\,dM^K_{1,s}(l') \bigr).
\end{eqnarray*}
Then for $t\geq T_0,$ on $F$ we have
\begin{eqnarray*}
&&X^K_{1,t}(k_1)X^K_{2,t}(k_2) \\[-1pt]
&& \qquad= \SG_{t-T_0}X^K_{1,T_0}(k_1) \SG
_{t-T_0}X^K_{2,T_0}(k_2)\\[-1pt]
&& \qquad \quad{} -\sum_{l\in S}\int_{T_0}^t \SG_{t-s}(k_1,l)\SG
_{t-s}(k_2,l)
\bigl(
X^K_{1,s}(l)\CA X^K_{2,s}(l) + X^K_{2,s}(l)\CA X^K_{1,s}(l) \bigr)\,
ds\\[-1pt]
&& \qquad \quad{} + N_t(k_1 ,k_2).
\end{eqnarray*}
\end{lemma}
\begin{pf}
This is an immediate consequence of (\ref{EdefMK}),
the fact that
\[
X^K_{1,t}(k)X^K_{2,t}(k)=0\qquad\mbox{for all }k\in S, \mbox{ if }
t\leq\tau^K,
\]
and of $\tau^K=\infty$ on $F$.
\end{pf}

\begin{lemma}
On the event $F$, the following decomposition holds:
%
%
\begin{equation}
\label{EdefZ}
Z_t=Z^{(1)}_t-Z^{(2)}_t+Z^{(3)}_{1,t}+Z^{(3)}_{2,t}\qquad\mbox{for
}t\geq T_0,
\end{equation}
where
\begin{eqnarray*}
Z^{(1)}_t&=&\sum_{l\in S} \int_{T_0}^t\SG_{s-T_0}X^K_{1,T_0}(l)
\bigl(\bar\CA(\SG_{s-T_0}X^K_{2,T_0})(l)
+\TMstar(l)\SG_{s-T_0}X^K_{2,T_0}(l) \bigr)\,ds,\\[-2pt]
Z^{(2)}_t&=&
\sum_{l\in S}\sum_{k\in S} \int_{T_0}^t\int_{T_0}^s \SG
_{s-r}(l,k)\bigl [\bigl(\bar\CA\SG_{s-r}(\cdot,k)\bigr)(l)
+\TMstar(l)\SG_{s-r}(l,k) \bigr]\\[-2pt]
&&\hphantom{\sum_{l\in S}\sum_{k\in S} \int_{T_0}^t\int_{T_0}^s}{}\times \bigl(X^K_{1,r}(k)\TM X^K_{2,r}(k)+\TM
X^K_{1,r}(k)X^K_{2,r}(k) \bigr)\,dr\,ds,\\[-2pt]
Z^{(3)}_{i,t}&=&\sum_{l\not= l'}\bar\TM(l,l')\\[-2pt]
&&
 {}\times
\sum_{k\in S}\int_{T_0}^t
\int_{T_0}^s
\SG_{s-r}(l,k)\\[-2pt]
&&\hphantom{{}\times\sum_{k\in S}\int_{T_0}^t
\int_{T_0}^s}
 {}\times \bigl(\SG_{s-r}X^K_{3-i,r}(l')-\SG
_{s-r}(l',k)X^K_{3-i,r}(k) \bigr)\,dM^K_{i,r}(k)\,ds.
\end{eqnarray*}
\end{lemma}
\begin{pf}
By definition,
\begin{eqnarray*}
Z_t&=&\frac12\sum_{i=1}^2\sum_{k\in S} \int
_{T_0}^{t}X^K_{i,s}(k)\CA
X^K_{3-i,s}(k)\,ds\\[-2pt]
&=&\frac12\sum_{i=1}^2\sum_{k\in S}
\int_{T_0}^{t}X^K_{i,s}(k)\sum_{l\in S}\bigl(\TM(k,l)-\1_{\{k=l\}}\bigr)
X^K_{3-i,s}(l)\,ds\\[-2pt]
&=&\mathop{\mathop{\sum}_{ k_1,k_2\in S}}_{ k_1\not=k_2}
\int_{T_0}^{t}\bar\TM(k_1,k_2)X^K_{1,s}(k_1)X^K_{2,s}(k_2)\,ds,
\end{eqnarray*}
where the last inequality follows since $X^K_{1,t}(k)X^K_{2,t}(k)=0$
for all $k\in S,
t\geq0$ on~$F$.
Now, by Lemma~\ref{lem:2}, we have
\begin{eqnarray*}
Z_t&=&\mathop{\mathop{\sum}_{ k_1,k_2\in S}}_{ k_1\not=k_2}
\int_{T_0}^{t}\bar\TM(k_1,k_2) \SG_{s-T_0}X^K_{1,T_0}(k_1) \SG
_{s-T_0}X^K_{2,T_0}(k_2)\,ds\\[-2pt]
&&{}-\mathop{\mathop{\sum}_{ k_1,k_2\in S}}_{ k_1\not=k_2} \int
_{T_0}^{t}\bar\TM
(k_1,k_2) \sum_{l\in S}\int_{T_0}^s \SG_{s-r}(k_1,l)\SG
_{s-r}(k_2,l) \\[-2pt]
&&\hphantom{-\mathop{\mathop{\sum}_{ k_1,k_2\in S}}_{ k_1\not=k_2} \int
_{T_0}^{t}\bar\TM
(k_1,k_2) \sum_{l\in S}\int_{T_0}^s}
{} \times\bigl(
X^K_{1,r}(l)\CA X^K_{2,r}(l) + X^K_{2,r}(l)\CA X^K_{1,r}(l) \bigr)\,
dr\,ds
\\[-2pt]
&&
{}+\mathop{\mathop{\sum}_{ k_1,k_2\in S}}_{ k_1\not=k_2}\int
_{T_0}^{t}\bar\TM
(k_1,k_2) N_s(k_1,k_2)\,ds.
\end{eqnarray*}
Let us consider the first term on the right-hand side of the above equation.
We easily get that
\begin{eqnarray*}
&&\mathop{\mathop{\sum}_{ k_1,k_2\in S}}_{ k_1\not=k_2}
\int_{T_0}^{t}\bar\TM(k_1,k_2) \SG_{s-T_0}X^K_{1,T_0}(k_1) \SG
_{s-T_0}X^K_{2,T_0}(k_2)\,ds
\\[-2pt]
&& \qquad = \sum_{ k_1\in S}
\int_{T_0}^{t}\SG_{s-T_0}X^K_{1,T_0}(k_1)
\bigl(\bar\CA\SG_{s-T_0}X^K_{2,T_0}(k_1) +\TMstar(k_1)\SG
_{s-T_0}X^K_{2,T_0}(k_1)
\bigr)\,ds\\[-2pt]
&& \qquad = Z^{(1)}_t
\end{eqnarray*}
and we are done with the first term in the decomposition.
Similarly, for the second term, we have
\begin{eqnarray*}
&&\mathop{\mathop{\sum}_{ k_1,k_2\in S}}_{ k_1\not=k_2} \int
_{T_0}^{t}\bar\TM
(k_1,k_2) \sum_{l\in S}\int_{T_0}^s \SG_{s-r}(k_1,l)\SG
_{s-r}(k_2,l) \\[-2pt]
&&
\hphantom{\mathop{\mathop{\sum}_{ k_1,k_2\in S}}_{ k_1\not=k_2} \int
_{T_0}^{t}\bar\TM
(k_1,k_2) \sum_{l\in S}\int_{T_0}^s}
 {} \times\bigl(
X^K_{1,r}(l)\CA X^K_{2,r}(l) + X^K_{2,r}(l)\CA X^K_{1,r}(l) \bigr)\,
dr\,ds
\\[-2pt]
&& \qquad= \sum_{l\in S}\sum_{ k_1\in S}\int_{T_0}^{t}\int_{T_0}^s
\SG
_{s-r}(k_1,l) [\bar\CA\SG_{s-r}(\cdot,l)(k_1)+\TMstar
(k_1)\SG_{s-r}(k_1,l) ]\\[-2pt]
\nonumber
&& \qquad \quad
\hphantom{\sum_{l\in S}\sum_{ k_1\in S}\int_{T_0}^{t}\int_{T_0}^s}
{} \times\bigl(
X^K_{1,r}(l)\CA X^K_{2,r}(l) + X^K_{2,r}(l)\CA X^K_{1,r}(l) \bigr)\,
dr\,ds\\[-2pt]
&& \qquad= Z^{(2)}_t .
\end{eqnarray*}

For the third term, we have
\begin{eqnarray*}
&&\hspace*{-3pt}\mathop{\mathop{\sum}_{ k_1,k_2\in S}}_{ k_1\not
=k_2}\int_{T_0}^{t}\bar\TM
(k_1,k_2) N_s(k_1,k_2)\,ds\\[-2pt]
&&\hspace*{-3pt} \qquad= \mathop{\mathop{\sum}_{ k_1,k_2\in S}}_{
k_1\not=k_2}\int_{T_0}^{t}\bar\TM(k_1,k_2)
\mathop{\mathop{\sum}_{l,l'\in S}}_{ l\neq l'}\int_{T_0}^s \SG
_{s-r}(k_1,l')\SG
_{s-r}(k_2,l)\\[-2pt] 
&&\hspace*{-3pt} \qquad \quad\hphantom{\mathop{\mathop{\sum}_{
k_1,k_2\in S}}_{ k_1\not=k_2}\int_{T_0}^{t}\bar\TM(k_1,k_2)
\mathop{\mathop{\sum}_{l,l'\in S}}_{ l\neq l'}\int_{T_0}^s}
{} \times\bigl(X^K_{1,r}(l')\,dM^K_{2,r}(l)
+X^K_{2,r}(l)\,dM^K_{1,r}(l') \bigr)\,ds\\[-2pt]
&&\hspace*{-3pt} \qquad = \mathop{\mathop{\sum}_{ k_1,k_2\in S}}_{
k_1\not=k_2}\bar\TM(k_1,k_2) \\[-2pt]
&&\hspace*{-3pt} \qquad \quad\hphantom{\mathop{\mathop{\sum}_{
k_1,k_2\in S}}_{ k_1\not=k_2}}
{}\times\int
_{T_0}^{t} \int_{T_0}^s \biggl\{ \sum_{l\in S} \SG
_{s-r}(k_2,l)\\[-2pt]
&&\hspace*{-3pt} \qquad \quad
\hphantom{\mathop{\mathop{\sum}_{
k_1,k_2\in S}}_{ k_1\not=k_2}
{}\times\int
_{T_0}^{t} \int_{T_0}^s \biggl\{ \sum_{l\in S}}
{}\times\bigl(\SG
_{s-r}X^K_{1,r}(k_1) - \SG_{s-r}(k_1,l)X^K_{1,r}(l) \bigr)\,d M^K_{2,r}(l)
\\[-2pt]
&&\hspace*{-3pt} \qquad \quad
\hphantom{\mathop{\mathop{\sum}_{ k_1,k_2\in S}}_{ k_1\not=k_2}
{}\times\int
_{T_0}^{t} \int_{T_0}^s \biggl\{}
{}
+ \sum_{l'\in S} \SG_{s-r}(k_1,l') \bigl(\SG
_{s-r}X^K_{2,r}(k_2)\\[-2pt]
&&\hspace*{-3pt} \qquad \quad
\hphantom{\mathop{\mathop{\sum}_{ k_1,k_2\in S}}_{ k_1\not=k_2}
{}\times\int
_{T_0}^{t} \int_{T_0}^s \biggl\{
{}
+ \sum_{l'\in S} \SG_{s-r}(k_1,l') \bigl(}
{}- \SG_{s-r}(k_2,l')X^K_{2,r}(l') \bigr)\,d M^K_{1,r}(l')
\biggr\}\,ds\\[-2pt]
&&\hspace*{-3pt} \qquad = \mathop{\mathop{\sum}_{ k_1,k_2\in S}}_{
k_1\not=k_2} \bar\TM(k_1,k_2)\\[-2pt]
&&\hspace*{-3pt} \qquad \quad
\hphantom{\mathop{\mathop{\sum}_{ k_1,k_2\in S}}_{ k_1\not=k_2}}
{}\times\int
_{T_0}^{t} \int_{T_0}^s \biggl\{ \sum_{l\in S} \SG_{s-r}(k_2,l)
\\[-2pt]
&&\hspace*{-3pt} \qquad \quad
\hphantom{\mathop{\mathop{\sum}_{ k_1,k_2\in S}}_{ k_1\not=k_2}
{}\times\int
_{T_0}^{t} \int_{T_0}^s \biggl\{ \sum_{l\in S}}
{}\times\bigl(\SG
_{s-r}X^K_{1,r}(k_1)- \SG_{s-r}(k_1,l)X^K_{1,r}(l) \bigr)\,d M^K_{2,r}(l)
\\[-2pt]
&&\hspace*{-3pt} \qquad \quad
\hphantom{\mathop{\mathop{\sum}_{ k_1,k_2\in S}}_{ k_1\not=k_2}
{}\times\int
_{T_0}^{t} \int_{T_0}^s \biggl\{}
{}
+ \sum_{l\in S} \SG_{s-r}(k_2,l) \bigl(\SG
_{s-r}X^K_{2,r}(k_1)\\[-2pt]
&&\hspace*{-3pt} \qquad \quad
\hphantom{\mathop{\mathop{\sum}_{ k_1,k_2\in S}}_{ k_1\not=k_2}
{}\times\int
_{T_0}^{t} \int_{T_0}^s \biggl\{
{}
+ \sum_{l\in S} \SG_{s-r}(k_2,l) \bigl(}\hspace*{8pt}
{}- \SG_{s-r}(k_1,l)X^K_{2,r}(l) \bigr)\,d M^K_{1,r}(l)
\biggr\}\,ds\\[-2pt]
&&\hspace*{-3pt} \qquad=Z^{(3)}_{1,t}+Z^{(3)}_{2,t},
\end{eqnarray*}
where the third equality follows by the symmetry of $\bar\TM$.
\end{pf}
\begin{lemma}\label{LboundZ2}
Recall $\bar G^*$ from (\ref{E1.9_1}). For all $t\geq T_0$, on $F$ we have
\[
Z^{(2)}_t\leq\frac{\bar G^*_{2(t-T_0)}}{2}Z_t.
\]
\end{lemma}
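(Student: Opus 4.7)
The plan is to apply Fubini to the double time integral in $Z^{(2)}_t$ (over the region $T_0\le r\le s\le t$), substitute $u=s-r$, and then reduce the lemma to a uniform pointwise bound on the inner $u$-integral. Writing $\Phi(r,k):=X^K_{1,r}(k)aX^K_{2,r}(k)+aX^K_{1,r}(k)X^K_{2,r}(k)\ge 0$, one gets
$$Z^{(2)}_t\;=\;\sum_{k\in S}\int_{T_0}^t\Phi(r,k)\,\widetilde P(r,k)\,dr,\qquad \widetilde P(r,k):=\int_0^{t-r}\sum_{l\in S}a_u(l,k)\bigl[(\bar\CA a_u(\cdot,k))(l)+a^*(l)a_u(l,k)\bigr]du.$$
On $A$ the state satisfies $X^K_{1,r}(k)X^K_{2,r}(k)=0$, so $\CA X^K_i=aX^K_i-X^K_i$ reduces to $aX^K_i$ inside the integrand of $Z_t$, giving $Z_t=\tfrac12\sum_{k}\int_{T_0}^t\Phi(r,k)\,dr$. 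Hence the whole lemma reduces to showing a uniform pointwise bound of order $\bar G^*_{2(t-T_0)}$ on $\widetilde P(r,k)$.

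The central ingredient is the identity
$$\sum_{l\in S}a_u(l,k)(\bar\CA a_u(\cdot,k))(l)\;=\;\tfrac12\,\tfrac{d}{du}\,\bar a_{2u}(k,k),$$
which I would derive by differentiating $\bar a_{2u}(k,k)=\sum_m a_u(m,k)^2$ using $\tfrac{d}{du}a_u=\CA a_u$, then symmetrising $(l,m)\leftrightarrow(m,l)$ to replace $\CA(l,m)$ by $\bar\CA(l,m)$. Integrating this contribution over $u\in[0,t-r]$ produces $\tfrac12\bigl(\bar a_{2(t-r)}(k,k)-1\bigr)$, which is non-positive because $a^T$ is stochastic: $\bar a_{2u}(k,k)=\sum_m a_u(m,k)^2\le \bigl(\sum_m a_u(m,k)\bigr)\cdot\sup_m a_u(m,k)\le 1$.

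For the other term I would use the trivial bound $a^*(l)\le 1$ together with the same formula $\bar a_{2u}(k,k)=\sum_l a_u(l,k)^2$:
$$\int_0^{t-r}\sum_{l\in S}a^*(l)\,a_u(l,k)^2\,du\;\le\;\int_0^{t-r}\bar a_{2u}(k,k)\,du\;=\;\tfrac12\,\bar G_{2(t-r)}(k,k)\;\le\;\tfrac12\,\bar G^*_{2(t-T_0)}.$$
Adding the two contributions yields $\widetilde P(r,k)\le \tfrac12\bar G^*_{2(t-T_0)}$, and plugging this uniform estimate into the Fubini'd expression for $Z^{(2)}_t$ and comparing with the corresponding formula for $Z_t$ proves the lemma (up to keeping track of constants; the factor of $2$ coming from $\sum_k\int\Phi\,dr=2Z_t$ must be absorbed either here or in a slightly sharper estimate of $\widetilde P$).

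The only non-routine step is the algebraic identity for $\sum_l a_u(l,k)(\bar\CA a_u(\cdot,k))(l)$; once this is in hand, the rest is bookkeeping. The delicate point in that bookkeeping is the subtraction of the diagonal terms $\bar a(l,l)=a(l,l)$, which is precisely what produces the $a^*(l)a_u(l,k)$ correction in the definition of $Z^{(2)}_t$ and allows the sum $\sum_{k_1\ne k_2}$ from the decomposition lemma to be completed to the full double sum $\sum_{k_1,k_2}$ whose diagonal is controlled through the semigroup structure.
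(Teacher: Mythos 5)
Your proposal follows essentially the same route as the paper: Fubini over $\{T_0\le r\le s\le t\}$, the identity $\int_0^u a_v^T\bar\CA a_v\,dv=\tfrac12(\bar a_{2u}-I)$, the bounds $a^*(l)\le 1$ and $\bar a_{2u}(k,k)\le 1$, and a uniform estimate by $\bar G^*_{2(t-T_0)}$. Your caveat about the extra factor of $2$ is well placed: in the paper's final line the quantity $\tfrac12\bar G^*_{2(t-T_0)}\int_{T_0}^t\sum_k\big(\CA X^K_{1,r}(k)X^K_{2,r}(k)+X^K_{1,r}(k)\CA X^K_{2,r}(k)\big)\,dr$ is set equal to $\tfrac12\bar G^*_{2(t-T_0)}Z_t$, but by the definition of $Z_t$ that integral equals $2Z_t$, so the argument (yours and the paper's) in fact yields $Z^{(2)}_t\le \bar G^*_{2(t-T_0)}Z_t$; this only shifts the numerical constants $c'$, $R$, $\ve$ downstream and does not affect the validity of the proof of Theorem~\ref{T2}.
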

\begin{pf}
Recall (\ref{E5.2}) and (\ref{E5.4_1}) and note that (with $I$ the
unit matrix)
%
%
\begin{equation}
\label{Epik}
\int_0^t \SG^T_r\bar\CA\SG_r\,dr=\frac12 (\bar\SG_{2t}-I).
\end{equation}
Hence [using that $\TMstar(l)\leq1$],
\begin{eqnarray*}
Z^{(2)}_t&\leq&
\sum_{k\in S} \int_{T_0}^t\int_{r}^t\bigl [ \SG^T_{s-r}\bar\CA\SG_{s-r}
+\bar\SG_{2(s-r)} \bigr](k,k)\,ds\\
&&\hphantom{\sum_{k\in S} \int_{T_0}^t\int_{r}^t}
{} \times
[X^K_{1,r}(k)\TM X^K_{2,r}(k)+\TM X^K_{1,r}(k)X^K_{2,r}(k) ]\,dr\\
&=&\sum_{k\in S} \int_{T_0}^t \biggl( \frac{1}{2} \bigl(\bar\SG
_{2(t-r)}(k,k)-1\bigr) + \int_{r}^{t} \bar\SG_{2(s-r)}(k,k) \,ds \biggr)\\
&&\hphantom{\sum_{k\in S} \int_{T_0}^t}
{} \times[X^K_{1,r}(k)\TM X^K_{2,r}(k)+\TM
X^K_{1,r}(k)X^K_{2,r}(k) ]\,dr.
\end{eqnarray*}
Then use the fact that $\bar\SG_t(k,k)-1\leq0$
and
\[
\TM X^K_{i,r}(k)X^K_{3-i,r}(k)= \CA X^K_{i,r}(k)X^K_{3-i,r}(k) \qquad
\mbox{on }F
\]
to get
that on $F$, $Z_t^{(2)}$ is bounded above by
\begin{eqnarray*}
&&\frac12\sum_{k\in S} \int_{T_0}^t \bar G_{2(t-r)}(k,k)
\bigl(\CA X^K_{1,r}(k)X^K_{2,r}(k)+X^K_{1,r}(k)\CA X^K_{2,r}(k)
\bigr)\,dr
\nonumber\\
&& \qquad\leq\frac12 \bar G^{*}_{2(t-T_0)}\int_{T_0}^t
\sum_{k\in S} \bigl(\CA X^K_{1,r}(k)X^K_{2,r}(k)+X^K_{1,r}(k)\CA
X^K_{2,r}(k) \bigr)\,dr\nonumber\\
&& \qquad=\frac12 \bar G^{*}_{2(t-T_0)}Z_t.
\end{eqnarray*}
\upqed
\end{pf}
Next we will handle $Z^{(3)}_{i}$.
\begin{lemma}
\label{Lemma3.6}
For all $t\geq0$ and for $i=1,2$, we have
%
%
\begin{equation}
\label{equt:11}
Z^{(3)}_{i,t}= \sum_{k\in S}\int_{T_0}^t h_i(k,t,r)\,dM^K_{i,r}(k),
\end{equation}
where for $t$ sufficiently large
\[
|h_i(k,t,r)|\leq \bar G^*_{2(t-T_0)}M^K_{3-i,r}\qquad\mbox{for all
}r\in[T_0,t].
\]
\end{lemma}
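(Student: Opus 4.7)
The plan is to apply a stochastic Fubini argument to swap the outer $ds$-integral with the stochastic integrals $\int\,dM^K_{i,r}(k)$ in the definition of $Z^{(3)}_{i,t}$, so that $Z^{(3)}_{i,t}$ takes the form \equ{equt:11}, and then to estimate the resulting deterministic kernel by means of a simple ODE for the symmetrised semigroup $\bar a_{2u}$.

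First I would carry out the Fubini rearrangement. Because the $M^K_i(k)$, $k\in S$, are orthogonal square integrable martingales (Section~\ref{S2.3}) whose total mass $M^K_{i,r}$ is uniformly bounded by $K/2$ on $A$, the required absolute integrability is routine and yields
$$h_i(k,t,r)=\sum_{l\neq l'}\bar a(l,l')\int_r^t a_{s-r}(l,k)\Big[(a_{s-r}X^K_{3-i,r})(l')-a_{s-r}(l',k)X^K_{3-i,r}(k)\Big]\,ds.$$
The bracket equals $\sum_{m\neq k}a_{s-r}(l',m)X^K_{3-i,r}(m)$, so after interchanging the sums over $l,l',m$ one obtains
$$h_i(k,t,r)=\sum_{m\neq k}X^K_{3-i,r}(m)\int_0^{t-r}K(u,k,m)\,du,\qquad K(u,k,m):=\sum_{l\neq l'}\bar a(l,l')a_u(l,k)a_u(l',m).$$
All summands here are non-negative, so $h_i\geq 0$ and it remains only to bound the deterministic kernel.

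The key observation is that adding the non-negative diagonal $l=l'$ term only enlarges $K$, giving $K(u,k,m)\leq(a_u^T\bar a\,a_u)(k,m)$. Since $\CA$ commutes with $a_u$ and $\CA^T$ with $a_u^T$, a short computation yields $\frac{d}{du}\bar a_{2u}=\frac{d}{du}(a_u^T a_u)=2\,a_u^T\bar\CA a_u=2\,a_u^T\bar a\,a_u-2\bar a_{2u}$, hence $a_u^T\bar a\,a_u=\tfrac12\frac{d}{du}\bar a_{2u}+\bar a_{2u}$. Integrating from $0$ to $t-r$, using $\bar a_0(k,m)=0$ for $m\neq k$, the trivial bound $\bar a_{2(t-r)}(k,m)\leq 1$, and the Cauchy-Schwarz estimate $\bar a_s(k,m)\leq\sqrt{\bar a_s(k,k)\bar a_s(m,m)}$ to dominate $\bar G_{2(t-r)}(k,m)$ by $\bar G^*_{2(t-T_0)}$, leads to
$$\int_0^{t-r}K(u,k,m)\,du\;\leq\;\tfrac12+\tfrac12\bar G^*_{2(t-T_0)}.$$

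To conclude, since $\bar G^*_{2(t-T_0)}\to\infty$ (the recurrence-type content of the standing hypotheses, which is anyway what drives the contradiction later in Step~2 of the proof of Theorem~\ref{T2}), for $t$ sufficiently large the right-hand side is dominated by $\bar G^*_{2(t-T_0)}$ itself. Combined with the trivial $\sum_{m\neq k}X^K_{3-i,r}(m)\leq M^K_{3-i,r}$, this produces the claimed uniform bound on $|h_i(k,t,r)|$. The only delicate points will be justifying the stochastic Fubini (absolute integrability of the rearranged kernel against $d\langle M^K_i(k)\rangle_r\otimes ds$) and spotting the semigroup identity $\frac{d}{du}\bar a_{2u}=2\,a_u^T\bar\CA a_u$, which reduces the entire kernel bound to a clean Green-function estimate.
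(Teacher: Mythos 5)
Your proposal is correct and follows essentially the same route as the paper: stochastic Fubini to extract the kernel $h_i(k,t,r)$, then the semigroup identity $\frac{d}{du}\bar a_{2u}=2\,a_u^T\bar\CA\,a_u$ (the paper's integrated form $\int_0^t a_r^T\bar\CA a_r\,dr=\frac12(\bar a_{2t}-I)$) together with the off-diagonal Green-function bound $\bar G_t(k,m)\leq\bar G_t^*$, yielding $|h_i|\leq\frac12(1+\bar G^*_{2(t-T_0)})M^K_{3-i,r}$ and hence the claim for large $t$. The only cosmetic difference is that the paper splits $\bar a(l,l')\1_{\{l\neq l'\}}=\bar\CA(l,l')+a^*(l)\1_{\{l=l'\}}$ into two terms $I_1,I_2$ and bounds each, whereas you enlarge the sum by reinstating the diagonal $l=l'$ term and then use $a_u^T\bar a\,a_u=\frac12\frac{d}{du}\bar a_{2u}+\bar a_{2u}$ in one shot — equivalent bookkeeping, same estimate.
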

\begin{pf}
First, by the stochastic Fubini theorem (see, e.g., Theorem~IV.64 in \cite
{Protter2004}),
we can change the order of integration in order to get
\begin{eqnarray*}
h_i(k,t,r)&=&\mathop{\mathop{\sum}_{l,l'\in S}}_{ l\neq l'}\bar\TM
(l,l')\int_r^t
\SG_{s-r}(l,k) \bigl(\SG_{s-r}X^K_{3-i,r}(l')-\SG
_{s-r}(l',k)X^K_{3-i,r}(k) \bigr)\,ds\\
&=&\int_r^t \SG^T_{s-r}\bar\CA\SG_{s-r} X^K_{3-i,r}(k)-\SG
^T_{s-r}\bar\CA\SG_{s-r}(k,k)X^K_{3-i,r}(k)\,ds\\
&&{} +\int_r^t\sum_{l\in S}\TMstar(l)\SG_{s-r}(l,k)[\SG
_{s-r}X^K_{3-i,r}(l)-\SG_{s-r}(l,k)X^K_{3-i,r}(k)]\,ds\\
&=:&I_1+I_2.
\end{eqnarray*}
For the first integral, using (\ref{Epik}), we get
\[
0 \leq\tfrac12\bar\SG_{2(t-r)}X^K_{3-i,r}(k)-\tfrac12\bar\SG
_{2(t-r)}(k,k)X^K_{3-i,r}(k) = I_1 \leq \tfrac12 M^K_{3-i,r}.
\]
For the second integral, we obtain similarly
\begin{eqnarray*}
0&\leq& I_2\leq\int_r^t\sum_{l\in S} \SG_{s-r}(l,k)\SG
_{s-r}X^K_{3-i,r}(l)\,ds\\
&=&\int_r^t\bar\SG_{s-r}X^K_{3-i,r}(k)\,ds\\
&=& \frac12\bar G^*_{2(t-r)}M^K_{3-i,r} \leq \frac12\bar
G^*_{2(T_0-r)}M^K_{3-i,r}.
\end{eqnarray*}
Combining the estimates for $I_1$ and $I_2$, we get
\[
|h_i(k,t,r)| \leq \tfrac12 \bigl(\bar G^*_{2(t-T_0)}+1 \bigr)
M^K_{3-i,r} \leq \bar G^*_{2(t-T_0)} M^K_{3-i,r},
\]
and we get the bound for $h_i(k,t,r)$ for $t$ sufficiently large.
\end{pf}
We have to introduce some notation and define a number of additional constants.
Define the event
%
%
\begin{equation}
\label{EdefC}
C=\biggl \{ \sum_{|k|\leq L} X^K_{i,T_0}(k) \geq \frac
{1}{2}M^K_{i,T_0} \mbox{ for }i=1,2 \biggr\}.
\end{equation}
Since $M^K_{i,T_0}<\infty$, $i=1,2$, there exists an $L>0$ such that
%
%
\begin{equation}
\label{EestC}\P[C]\geq1-\delta.
\end{equation}

\begin{lemma}
\label{Lemma3.7}
There exists a $ T_1\geq T_0$ such that for all $t>T_1$,
\[
Z^{(1)}_t\geq
c' \bar G^*_{2(t-T_0)} M^K_{1,T_0}M^K_{2,T_0}
\qquad\mbox{on }C\cap F.
\]
\end{lemma}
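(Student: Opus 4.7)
The plan is to split the integrand of $Z^{(1)}_t$ into the contribution from $\bar\CA$ and the contribution from $a^*(l)$, then exploit the disjoint supports of $X^K_{1,T_0}$ and $X^K_{2,T_0}$ on the event $A$. Writing
\[
Z^{(1)}_t \;=\; \int_{T_0}^t \langle a_{s-T_0} X^K_{1,T_0},\,\bar\CA\, a_{s-T_0} X^K_{2,T_0}\rangle\,ds \;+\; \int_{T_0}^t \sum_l a^*(l)\,(a_{s-T_0} X^K_{1,T_0})(l)(a_{s-T_0} X^K_{2,T_0})(l)\,ds,
\]
the first summand equals $\tfrac12 \langle X^K_{1,T_0},(\bar a_{2(t-T_0)}-I) X^K_{2,T_0}\rangle$ by the identity \equ{Epik}. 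Because $X^K$ takes values in $E^S$ and $\tau^K=\infty$ on $A$, the initial fields $X^K_{1,T_0}$ and $X^K_{2,T_0}$ have disjoint supports, so $\langle X^K_{1,T_0}, X^K_{2,T_0}\rangle = 0$ and this first summand is nonnegative. I would then discard it, keeping only the $a^*$-term as a lower bound.

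For the $a^*$-term, bound $a^*(l)\geq a^*$ and use the definition $\bar a_{2u}(k_1,k_2)=\sum_l a_u(l,k_1)a_u(l,k_2)$ from \equ{E5.2} to rewrite
\[
\int_0^{t-T_0}\sum_l (a_u X^K_{1,T_0})(l)(a_u X^K_{2,T_0})(l)\,du \;=\; \tfrac12\sum_{k_1,k_2\in S} X^K_{1,T_0}(k_1)\,X^K_{2,T_0}(k_2)\,\bar G_{2(t-T_0)}(k_1,k_2),
\]
giving $Z^{(1)}_t \geq \tfrac{a^*}{2}\sum_{k_1,k_2} X^K_{1,T_0}(k_1) X^K_{2,T_0}(k_2) \bar G_{2(t-T_0)}(k_1,k_2)$.

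Next, restrict the sum to the finite set $\{(k_1,k_2):|k_i|\leq L\}$. On the event $C$ of \equ{EdefC}, the sums $\sum_{|k_i|\leq L} X^K_{i,T_0}(k_i)\geq M^K_{i,T_0}/2$, so the product sums give $\sum_{|k_1|,|k_2|\leq L} X^K_{1,T_0}(k_1) X^K_{2,T_0}(k_2) \geq \tfrac14 M^K_{1,T_0}M^K_{2,T_0}$. Since this set is finite, the assumption \equ{E1.10.1} yields a $T_1\geq T_0$ such that $\bar G_{2(t-T_0)}(k_1,k_2)\geq (c/2)\bar G^*_{2(t-T_0)}$ uniformly for $|k_1|,|k_2|\leq L$ and $t\geq T_1$. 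Combining everything produces
\[
Z^{(1)}_t \;\geq\; \tfrac{a^*}{2}\cdot \tfrac{c}{2}\bar G^*_{2(t-T_0)}\cdot \tfrac14 M^K_{1,T_0}M^K_{2,T_0}
\;=\; \tfrac{a^*c}{16}\,\bar G^*_{2(t-T_0)} M^K_{1,T_0}M^K_{2,T_0}
\;=\; c'\bar G^*_{2(t-T_0)} M^K_{1,T_0}M^K_{2,T_0},
\]
which is the claim.

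I do not anticipate any serious obstacle; the main subtleties are (i) the algebraic identity $\bar\CA(a_u X)(l)+a^*(l)(a_u X)(l)=\sum_{m\neq l}\bar a(l,m)(a_u X)(m)\geq 0$ (equivalently, using \equ{Epik} together with disjoint supports) which shows that the $\bar\CA$-part can be safely dropped, and (ii) invoking the uniform liminf from \equ{E1.10.1} only over the finite window $|k_i|\leq L$, which is precisely why we had to introduce the event $C$ to localise the initial mass.
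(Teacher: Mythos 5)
Your proof is correct and follows exactly the route taken in the paper: split $Z^{(1)}_t$ into the $\bar\CA$-part and the $a^*$-part, show the former is nonnegative via \equ{Epik} and the disjoint supports of $X^K_{1,T_0}$ and $X^K_{2,T_0}$ on $A$, rewrite the latter using $\sum_l a_u(l,k_1)a_u(l,k_2)=\bar a_{2u}(k_1,k_2)$, and then localise to the finite window $|k_i|\le L$ via the event $C$ and invoke \equ{E1.10.1}. The only quibble is a peripheral one: the algebraic identity $\bar\CA(a_uX)(l)+a^*(l)(a_uX)(l)=\sum_{m\ne l}\bar a(l,m)(a_uX)(m)$ you mention in the coda shows that the full integrand is nonnegative, not that the $\bar\CA$-part alone is; the argument that actually lets you drop $Z^{(1,1)}_t$ is the \equ{Epik}-plus-disjoint-supports one you give in the body, which is also what the paper uses.
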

\begin{pf}
In order to simplify the notation, let
$Z^{(1)}_t=Z^{(1,1)}_t+Z^{(1,2)}_t$, where
\begin{eqnarray*}
Z^{(1,1)}_t&:=& \sum_{l\in S}\int_{T_0}^{t}\SG
_{s-T_0}X^K_{1,T_0}(l)\bar\CA(\SG_{s-T_0}X^K_{2,T_0})(l)\,ds,\\
Z^{(1,2)}_t&:=&
\sum_{l\in S}\int_{T_0}^{t}\SG_{s-T_0}X^K_{1,T_0}(l) \TMstar(l)\SG
_{s-T_0}X^K_{2,T_0}(l)\,ds.
\end{eqnarray*}
We start with showing that $Z^{(1,1)}_t\geq0$. To this end, using
(\ref{Epik}), we compute
\begin{eqnarray*}
Z^{(1,1)}_t&=&\sum_{l\in S}\int_{T_0}^{t}X^K_{1,T_0}(l) [\SG
^T_{s-T_0}\bar\CA\SG_{s-T_0}]X^K_{2,T_0}(l)\,ds\\[-2pt]
&=&\frac12\sum_{l\in S}\bar X^K_{1,T_0}(l)\bigl(\bar\SG
_{2(t-T_0)}-I\bigr)X^K_{2,T_0}(l)\\[-2pt]
&=&\frac12\sum_{l\in S}\bar X^K_{1,T_0}(l)\bar\SG
_{2(t-T_0)}X^K_{2,T_0}(l) \geq0\qquad\mbox{on }F,
\end{eqnarray*}
since $X^K_{1,T_0}(k)X^K_{2,T_0}(k)=0$ on $F$ for all $k\in S$.

The bound for $Z^{(1,2)}$ follows similarly to Dawson and Perkins \cite
{DawsonPerkins1998}, page~1109. Recall $c$ from (\ref{E1.10.1}). For
$k,l\in S$ such that $|k|,|l|\leq L$, let $T(k,l)$ be large enough such that
\[
\frac{\bar G_t(k,l)}{\bar G_t^*}\geq\frac{c}{2}\qquad\mbox{for all
}t\geq T(k,l).
\]
Define
%
%
\begin{equation}
\label{equt:6}
T_1=T_0+\max_{|k|,|l|\leq L} T(k,l) .
\end{equation}
Then for any $t\geq T_1$, we have that on $C$
\begin{eqnarray*}
Z^{(1,2)}_t&\geq& \frac{\TMstar}{2}\sum_{k,l\in S} \bar
G_{2(t-T_0)}(k,l)X^K_{1,T_0}(k)X^K_{2,T_0}(l)\\[-2pt]
&\geq& \frac{\TMstar}{2} \biggl(\sum_{|k|,|l|\leq L}
X^K_{1,T_0}(k)X^K_{2,T_0}(l) \biggr)
\min_{|k|, |l|\leq L}\bar G_{2(t-T_0)}(k,l)\\[-2pt]
&\geq&\frac{\TMstar c}{16} M^K_{1,T_0}M^K_{2,T_0} \bar
G^*_{2(t-T_0)} ,
\end{eqnarray*}
where the last inequality follows by (\ref{EdefC}) and (\ref{equt:6}).
Recalling (\ref{equt:3}), we get that on $C$
\[
Z^{(1,2)}_t
\geq
c' \bar G^*_{2(t-T_0)} M^K_{1,T_0}M^K_{2,T_0}.
\]
Since
$Z^{(1,1)}_T\geq0$ on $F$, this finishes the proof of Lemma~\ref{Lemma3.7}.
\end{pf}

From Lemmas~\ref{LboundZ2} and~\ref{Lemma3.7}, we get that on
$F\cap C$, $Z_t$ is bounded below by
%
%
\begin{equation}
\label{equt:4}
Z_t \geq c' \bar G^*_{2(t-T_0)} M^K_{1,T_0}M^K_{2,T_0}
- \frac{ \bar G^*_{2(t-T_0)}}{2}Z_t + \bar Z^{(3)}_t ,
\end{equation}
where $\bar Z^{(3)}_t=Z^{(3)}_{1,t}+Z^{(3)}_{2,t}.$
Let $\alpha:=1/(1+\bar G^*_{2(t-T_0)}/2)$.
From (\ref{equt:4}), we get that on $F\cap C$
%
%
\begin{equation}
Z_{t} \geq\alpha c' \bar G^*_{2(t-T_0)} M^K_{1,T_0}M^K_{2,T_0}+
\alpha\bar Z^{(3)}_t.\vadjust{\goodbreak}
\end{equation}
Then [recall from (\ref{EcompVZ}) that $Z_t\leq\VV_t-\VV_{T_0}$ on $F$]
%
%
\begin{eqnarray}
\label{EfE}
&&\P[\VV_t-\VV_{T_0} \leq\ve, F\cap C ]\nonumber\\[-1pt]
&&  \qquad=\P[Z_t\leq\ve, \VV_t-\VV_{T_0}\leq\ve, F\cap C ]\nonumber
\\[-9pt]
\\[-9pt]
&&  \qquad\leq\P\bigl[
\alpha\bar Z^{(3)}_t\leq\ve-\alpha c' \bar G^*_{2(t-T_0)}
M^K_{1,T_0}M^K_{2,T_0},\nonumber\\[-1pt]
&& \hspace*{32pt}\qquad\hphantom{\leq\P\bigl[} \VV_t-\VV_{T_0}\leq\ve, F\cap C \bigr].\nonumber
\end{eqnarray}

We assume that $t$ is sufficiently large so that $\bar
G^*_{2(t-T_0)}\geq2$, hence
%
%
\begin{equation}
\label{Econdi035}
1\leq\alpha\bar G^*_{2(t-T_0)}=\frac{2 \bar G^*_{2(t-T_0)}}{2+
\bar G^*_{2(t-T_0)}}\leq2 .
\end{equation}
By (\ref{Econdi035}), (\ref{EE3.1}) and (\ref{equt:16b}), we get
\begin{eqnarray*}
&&\P\bigl[
\alpha\bar Z^{(3)}_t\leq\ve-\alpha c' \bar G^*_{2(t-T_0)}
M^K_{1,T_0}M^K_{2,T_0},
\VV_t-\VV_{T_0}\leq\ve, F\cap C \bigr]\\[-1pt]
&& \qquad\leq\P\bigl[
\alpha\bar Z^{(3)}_t\leq\ve-\alpha c' \bar G^*_{2(t-T_0)}
\delta^2,
\VV_t-\VV_{T_0}\leq\ve, F\cap C \bigr]+1-5\delta
\\[-1pt]
&& \qquad\leq\P\bigl[
\alpha\bar Z^{(3)}_t\leq\ve-c'\delta^2,
\VV_t-\VV_{T_0}\leq\ve, F\cap C \bigr]+1-5\delta\\[-1pt]
&& \qquad= \P\bigl[
\alpha\bar Z^{(3)}_t\leq-R\ve,
\VV_t-\VV_{T_0}\leq\ve, F\cap C \bigr]+1-5\delta\\[-1pt]
&& \qquad\leq R^{-2}\ve^{-2}\alpha^2\E\bigl[\bigl(\bar Z^{(3)}_t\bigr)^2\1_{\{
{{\langle M^K_{1,\cdot}\rangle}}t-{{\langle M^K_{1,\cdot}\rangle
}}{T_0}\leq\ve\}} \1_F \bigr]+1-5\delta.
\end{eqnarray*}
Using Lemma~\ref{Lemma3.6}, this inequality can be continued by
%
%
\begin{eqnarray}
\label{equt:15}
&\leq&
R^{-2}\ve^{-1} (K/2)^2\bigl(\alpha\bar G^*_{2(t-T_0)}\bigr)^2 +1-5\delta
\nonumber\\[-1pt]
&\leq&
\frac{K^2}{R^2\ve} +1-5\delta
\\[-1pt]
&\leq&1-4\delta.\nonumber
\end{eqnarray}
Combining (\ref{EfE}), (\ref{equt:15}), (\ref{EestF}) and (\ref
{EestC}), we get
\begin{eqnarray*}
&&\P[\VV_t-\VV_{T_0}\leq\ve]\\[-1pt]
&& \qquad\leq\P[ \VV_t-\VV
_{T_0}\leq\ve, F\cap C ] +\P[F^c]+\P[C^c]\\[-1pt]
&& \qquad\leq1-4\delta+\delta+\delta=1-2\delta.
\end{eqnarray*}
This is a contradiction to (\ref{EE3.3}) and hence finishes the proof of
Theorem~\ref{T2}.

%

\printaddresses

\end{document}